\newtheorem{theor}{Theorem}[section]
\newtheorem{prop}[theor]{Proposition}
\newtheorem{lem}[theor]{Lemma}
\newtheorem{pr}[theor]{Problem}
\newtheorem{cor}[theor]{Corollary}
\theoremstyle{definition}
\newtheorem{de}[theor]{Definition}
\newtheorem{const}[theor]{Construction}
\newtheorem{ex}[theor]{Example}
\theoremstyle{remark}
\newtheorem {re}[theor]{Remark}
\DeclareMathOperator{\Aut}{Aut}
\DeclareMathOperator{\SAut}{SAut}
\def\reg{{\rm reg}\,}
\def\GG{{\mathbb G}}
\def\KK{{\mathbb K}}
\def\ZZ{{\mathbb Z}}
\def\GA{{\mathbb G}_a}
\def\NN{{\mathbb N}}
\def\QQ{{\mathbb Q}}
\begin{document}
\date{}
\title[]{Generically flexible affine varieties with invariant divisors}
\author{Sergey Gaifullin}
\address{Lomonosov Moscow State University, Faculty of Mechanics and Mathematics, Department of Higher Algebra, Leninskie Gory 1, Moscow, 119991 Russia; \linebreak 
Moscow Center for Fundamental and Applied Mathematics, Moscow, Russia; \linebreak and \linebreak
HSE University, Faculty of Computer Science, Pokrovsky Boulvard 11, Moscow, 109028 Russia.}
\email{sgayf@yandex.ru}
\thanks{The article was prepared within the framework of the project “International academic cooperation” HSE University.}

\subjclass[2020]{Primary 14J50, 14R20;\  Secondary 13A50, 14M25}

\keywords{Flexible variety, generically flexible variety, automorphism, divisor}

\begin{abstract}
We construct examples of normal affine varieties $X$ of dimension $\geq 4$ such that the group of special automorphisms $\SAut(X)$ acts on $X$ with an open orbit $\mathcal{O}$ and the complement $X\setminus\mathcal{O}$ has codimension one.  
\end{abstract}

\maketitle

\section{Introduction}

Let $\KK$ be an algebraically closed field of characteristic zero and $\GA$ be its additive group. Assume $X$ is an irreducible affine algebraic variety over $\KK$. Let us denote by $\Aut(X)$ the group of regular automorphisms of $X$. 
Following \cite{AFKKZ} we consider the {\it subgroup of special automorphisms} $\SAut(X)$  of the group $\Aut(X)$ generated by all one-parameter unipotent subgroups, that is, subgroups in $\Aut(X)$ coming from all regular actions 
$\GA\times X\rightarrow X$. These algebraic one-parameter unipotent subgroups of $\Aut(X)$ we call {\it $\GA$-subgroups}. 

Recall that a regular point $x\in X$ is called {\it flexible} if the tangent space $\mathrm{T}_xX$ is spanned by tangent vectors to orbits of $\GA$-subgroups. We say that a variety $X$ is {\it flexible} if all regular points $x\in X$ are flexible. 

An action of a group $G$ on a set $Y$ is called {\it $m$-transitive} if for every two $m$-tuples of pairwise distinct points $(a_1, . . . , a_m)$ and $(b_1, . . . , b_m)$ on~$Y$ there exists an element $g\in G$ such that for all $i$ we have $g\cdot a_i =b_i$.
An action is {\it infinitely transitive} if it is $m$-transitive for all positive integers~$m$.

In \cite[Theorem~0.1]{AFKKZ} it is proved that for an irreducible affine variety $X$ of dimension $\geq 2$ the following conditions are equivalent:

(i) the group $\SAut(X)$  acts transitively on $X^{\mathrm{reg}}$;

(ii) the group $\SAut(X)$  acts infinitely transitively on $X^{\mathrm{reg}}$;

(iii) the variety $X$ is flexible.

There are a lot of examples of flexible varieties, see e.g.~\cite{AFKKZ, AKZ, Du, GSh, P}. 

In the same paper~\cite{AFKKZ} it is proved that each $\SAut(X)$-orbit is open in its closure. In particular, if $\SAut(X)$-orbit is dense in $X$, it is open. If $\SAut(X)$ acts on $X$ with an open orbit, the variety $X$
 is called {\it generically flexible}. This condition is equivalent to existence of at least one flexible point on $X$. Another equivalent condition is that any rational $\SAut(X)$-invariant function on $X$ is a constant. 
 
 Generic flexibility does not imply flexibility, but only one family of examples of this type is available in the literature. Namely,  a family of smooth Gizatullin surfaces $X$ which are unions of  an open $\SAut(X)$-orbit and a finite collection of $\SAut(X)$-fixed points
is given in~\cite{K}. So it is an interesting problem to build out new generically flexible, but not flexible varieties. In particular, we wonder whether the complement to an open $\SAut(X)$-orbit can contain a divisor. 

In this work we construct several examples of this type. It is relatively easy to obtain a non-normal affine flexible variety with a divisor in singular locus. In Section~\ref{tws}, we provide such an example in the category of toric varieties. At the same time any normal non-degenerate affine toric variety $X$ is flexible, so the group $\SAut(X)$ acts transitively on the regular locus $X^{\reg}$ and the complement to the open orbit has codimension at least two in $X$.

In Section~\ref{ms} for each $m\geq 4$ we construct an example of a normal affine $m$-fold $X$ such that the group $\SAut(X)$ acts on $X$ with an open orbit $\mathcal{O}$ and the complement $X\setminus\mathcal{O}$ has codimension one in $X$.  The variety $X$ is obtained as a categorical quotient of an affine trinomial hypersurface by a one-dimensional torus. In this case the complement  $X\setminus\mathcal{O}$ is a union of four prime divisors that are permuted by the group $\Aut(X)$. The intersection of these four divisors is an affine line $L$ that is the singular locus of $X$. Moreover, the group $\SAut(X)$ acts on $X^{\reg}$ with five orbits and $L$ consists of $\SAut(X)$-fixed points. At the same time, the group $\Aut(X)$ acts on $X$ with four orbits, see~Theorem~\ref{a}. 


The author is grateful to Ivan Arzhantsev and Kirill Shakhmatov for useful discussions.
\section{Preliminaries}
\subsection{Derivations}

Let $A=\KK[X]$ be the algebra of regular functions on an irreducible affine variety $X$.  

\begin{de}
A linear mapping $\delta\colon A\rightarrow A$ is called {\it derivation} if it satisfies the Leibniz rule: $\delta(ab)=a\delta(b)+b\delta(a)$.

A derivation is called {\it locally nilpotent} (LND) if for any $a\in A$ there is $n\in\NN$ such that $\delta^n(a)=0$.

A derivation is called {\it semisimple} if there exists a basis of $A$ consisting of $\delta$-semi-invariants. Recall that $a\in A$ is a $\delta$-semi-invariant if $\delta(a)=\lambda a$ for some $\lambda\in\KK$.
\end{de}
Having an LND $\delta$ one can consider its exponent 
$$
\exp(\delta)=\mathrm{id}+\delta+\frac{\delta^2}{2!}+\frac{\delta^3}{3!}+\ldots\in\Aut(A).
$$
Exponential mapping gives one-to-one correspondence between LNDs and $\GA$-subgroups in $\mathrm{Aut}(A)$:
$$
\delta\leftrightarrow \{\exp(s\delta)\mid s\in\KK\}.
$$
Having a $\GA$-action, one can obtain the corresponding LND as the derivative to this action at zero:
$\delta(f)=\frac{\mathrm{d} (s\cdot f)}{\mathrm{d} s}\mid_{s=0}$. Each LND  $\delta$ corresponds to a vector field, which we also denote by $\delta$. At each point~$p$ this vector field gives a tangent vector to the orbit of the $\GA$-subgroup corresponding to $\delta$. 

Similarly, if we have an action of the multiplicative group $(\KK^\times, \cdot)$ on~$A$, then its derivative at unity is a semisimple derivation, and this gives a bijection between $\KK^\times$-actions and semisimple derivations.

Let $F$ be an abelian group.

\begin{de}
An algebra $A$ is called {\it $F$-graded} if $$A=\bigoplus_{f\in F}A_f,$$ and $A_fA_g\subset A_{f+g}$ for all $f,g\in F$.
\end{de}

\begin{de}
A derivation $\delta\colon A\rightarrow A$ is called {\it $F$-homogeneous of degree $f_0\in F$} if for every $a\in A_f$ we have $\delta(a)\in A_{f+f_0}$.
\end{de}

Let $A$ be a finitely generated $\ZZ$-graded algebra. If is easy to see that any derivation $\delta$ can be decomposed onto a finite sum of homogeneous ones, i.e. 
$\delta=\sum_{i=l}^k\delta_i$, where $\delta_i$ is a homogeneous derivation of degree~$i$. 
Further when we write $\delta=\sum_{i=l}^k\delta_i$, we assume $\delta_l\neq 0$ and $\delta_k\neq 0$.

The following lemma follows from~\cite[Lemma~3.1]{FZ}, see also~\cite{Re} for part i). 
\begin{lem}\label{fl}
\,Let $A$ be a finitely generated $\ZZ$-graded algebra and $\delta\colon A\rightarrow A$ be a derivation. Suppose $\delta=\sum_{i=l}^k\delta_i$, where $\delta_i$ is a homogeneous derivation of degree $i$. Then

i) if $\delta$ is an LND, then $\delta_l$ and $\delta_k$ are LNDs;

ii) if $\delta$ is a semisimple derivation, then $l\neq 0$ implies $\delta_l$ is an LND (respectively $k\neq 0$ implies $\delta_k$ is an LND).
\end{lem}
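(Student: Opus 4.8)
The plan is to prove both parts by extracting appropriate $\ZZ$-homogeneous components of the iterates $\delta^n$. I would first record two reductions. (a) For any derivation $D$ of $A$, the set $\mathrm{Nil}(D)=\{a\in A:D^N(a)=0\text{ for some }N\in\NN\}$ is a $\KK$-subalgebra: stability under products follows from the Leibniz rule and the fact that in $D^{N+M-1}(ab)=\sum_{j}\binom{N+M-1}{j}D^j(a)D^{N+M-1-j}(b)$ every summand vanishes as soon as $D^N(a)=D^M(b)=0$. Hence, fixing a finite homogeneous system of generators of $A$, it suffices to prove that $\delta_l$ and $\delta_k$ are nilpotent on each homogeneous $a\in A_d$. (b) Since $\delta_i$ raises $\ZZ$-degree by $i$, expanding $\delta^n=\sum_{(i_1,\dots,i_n)\in\{l,\dots,k\}^n}\delta_{i_n}\circ\cdots\circ\delta_{i_1}$ and sorting the summands by the total shift $i_1+\cdots+i_n$ shows that for $a\in A_d$ the homogeneous component of $\delta^n(a)$ of degree $d+nk$ is exactly $\delta_k^n(a)$, and the one of degree $d+nl$ is exactly $\delta_l^n(a)$, all remaining components lying in degrees strictly between; only homogeneity of the $\delta_i$ is used here, not that they commute.

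For part i), given homogeneous $a\in A_d$, pick $n$ with $\delta^n(a)=0$; comparing the degree-$(d+nk)$ and degree-$(d+nl)$ parts gives $\delta_k^n(a)=0$ and $\delta_l^n(a)=0$. So $\delta_k$ and $\delta_l$ are nilpotent on homogeneous elements, hence LNDs by reduction (a).

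For part ii), semisimplicity of $\delta$ yields a decomposition $A=\bigoplus_\mu A^{(\mu)}$ with $A^{(\mu)}=\{a:\delta(a)=\mu a\}$. Fix homogeneous $a\in A_d$ and write its eigen-expansion $a=\sum_\mu a^{(\mu)}$ — a finite sum — so $\delta^n(a)=\sum_\mu\mu^n a^{(\mu)}$ for all $n$. Each $a^{(\mu)}$ has bounded $\ZZ$-support, so there is a finite set $S\subset\ZZ$ containing every degree that occurs in $\delta^n(a)$, for every $n$. If $l\neq 0$ then $d+nl\notin S$ for all large $n$, whence the degree-$(d+nl)$ component of $\delta^n(a)$, which is $\delta_l^n(a)$, vanishes; thus $\delta_l$ is nilpotent on homogeneous elements and an LND by (a). The statement for $\delta_k$ when $k\neq 0$ is obtained identically with $d+nk$ replacing $d+nl$.

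The only delicate points are the two finiteness assertions in part ii) — that the eigen-expansion of $a$ is a finite sum, and that the degrees occurring in the family $\{\delta^n(a)\}_n$ stay inside one fixed finite set $S$ — both of which are immediate from $A=\bigoplus_\mu A^{(\mu)}$ being a direct sum of subspaces of the finitely generated $\ZZ$-graded algebra $A$. Once these are in place, the degree bookkeeping of (b) forces the claimed vanishing, and I do not expect any further obstruction.
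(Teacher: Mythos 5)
Your proof is correct. The paper gives no proof of this lemma, deferring to \cite[Lemma~3.1]{FZ} and to \cite{Re} for part i); your argument --- reading off the extreme homogeneous components of $\delta^n(a)$ as exactly $\delta_l^n(a)$ and $\delta_k^n(a)$, and using that the set of elements on which a derivation acts nilpotently is a subalgebra --- is precisely the standard argument underlying those references, and the two finiteness points you single out in part ii) are correctly settled by the eigenspace decomposition $A=\bigoplus_\mu A^{(\mu)}$ together with the finiteness of the homogeneous support of each $a^{(\mu)}$.
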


If we have a $\mathbb{Z}^k$-grading on $A$, then applying the previous lemma several times we obtain the following statement.

\begin{lem}\label{ffll}
Let $A$ be a finitely generated $\ZZ^k$-graded algebra and $\delta\colon A\rightarrow A$ be a derivation. We have $\delta=\sum_{\gamma\in\mathbb{Z}^k}\delta_\gamma$, where $\delta_\gamma$ is a homogeneous derivation of degree $\gamma$. Suppose $\beta$ is a vertex of the convex hull of the set of degrees $\gamma$ such that $\delta_\gamma\neq 0$. Then

i) if $\delta$ is an LND, then $\delta_\beta$ is an LND;

ii) if $\delta$ is a semisimple derivation and $\beta\neq 0$, then $\delta_\beta$ is an LND.
\end{lem}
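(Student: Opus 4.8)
The plan is to reduce the $\ZZ^k$-graded statement to repeated applications of Lemma~\ref{fl}, using a sequence of auxiliary $\ZZ$-gradings obtained by projecting the $\ZZ^k$-grading onto well-chosen linear functionals. First I would recall that since $\beta$ is a vertex of the convex hull $P$ of the (finite) support $S=\{\gamma\in\ZZ^k:\delta_\gamma\neq 0\}$, there is a linear functional $\varphi_1\colon\ZZ^k\to\ZZ$ (with integer coefficients, which can be arranged since $S$ is finite) that attains its strict maximum over $P$ exactly at $\beta$. Pushing the $\ZZ^k$-grading forward along $\varphi_1$ gives a $\ZZ$-grading of $A$, and with respect to this grading $\delta=\sum_i\delta^{(i)}$ where $\delta^{(i)}=\sum_{\varphi_1(\gamma)=i}\delta_\gamma$; the top piece $\delta^{(k_{\max})}$ equals $\delta_\beta$ exactly because $\beta$ is the unique maximizer. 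By Lemma~\ref{fl}(i), if $\delta$ is an LND then $\delta_\beta$ is an LND, which already settles part~i) in one stroke, so no induction on $k$ is actually needed there.

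For part ii) the subtlety is that Lemma~\ref{fl}(ii) only gives that the top homogeneous component of a semisimple derivation is an LND when its degree is nonzero. If the chosen functional $\varphi_1$ happens to satisfy $\varphi_1(\beta)\neq 0$ we are done immediately by the same one-step argument. The real work is the case $\varphi_1(\beta)=0$. Here I would argue as follows: among all integer functionals $\varphi$ that attain their maximum over $P$ solely at $\beta$, I want to pick one with $\varphi(\beta)\neq 0$. Since $\beta\neq 0$ by hypothesis, choose any functional $\psi$ with $\psi(\beta)\neq 0$; then for $N$ large enough $\varphi_1+N^{-1}\psi$ — after clearing denominators, $N\varphi_1+\psi$ — still attains its maximum over the finite set $S$ uniquely at $\beta$ (because $\varphi_1$ already separates $\beta$ from the rest of $S$ by a positive gap, and scaling $\varphi_1$ by $N$ dominates the bounded perturbation $\psi$), while $(N\varphi_1+\psi)(\beta)=\psi(\beta)\neq 0$. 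Replacing $\varphi_1$ by $\varphi:=N\varphi_1+\psi$ and pushing the grading forward along $\varphi$, the top component is again exactly $\delta_\beta$, now sitting in a nonzero degree, and Lemma~\ref{fl}(ii) yields that $\delta_\beta$ is an LND.

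So in fact the proof does not really proceed by "applying the previous lemma several times" in a literal inductive sense; rather it applies Lemma~\ref{fl} once, to a single cleverly chosen $\ZZ$-grading. The only place where iteration of Lemma~\ref{fl} is the natural route is if one insists on building the separating functional coordinate-by-coordinate: one can first take the coordinate grading in which $\beta$ has an extreme value, extract the corresponding face, then grade that face by the next coordinate, and so on, peeling off one dimension at a time until only $\beta$ remains; each step is an instance of Lemma~\ref{fl}. I would mention this as the alternative phrasing that matches the sentence preceding the lemma, but present the one-functional argument as the clean proof.

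The main obstacle — and it is a minor one — is the bookkeeping in part ii) to guarantee simultaneously that (a) the pushed-forward grading isolates $\delta_\beta$ as the unique top component, and (b) that top degree is nonzero. The vertex hypothesis delivers (a) for a whole cone of functionals, and $\beta\neq 0$ gives enough room to also achieve (b); the perturbation-and-clear-denominators trick makes this precise. Everything else is the routine observation that a linear map $\ZZ^k\to\ZZ$ turns a $\ZZ^k$-grading into a $\ZZ$-grading compatible with $\delta$'s homogeneous decomposition.
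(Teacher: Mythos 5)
Your argument is correct. The paper itself gives no written proof of Lemma~\ref{ffll} beyond the remark that it follows by ``applying the previous lemma several times,'' i.e.\ by iterating Lemma~\ref{fl} along a sequence of coordinate (or other) $\ZZ$-gradings, peeling off faces of the Newton polytope until only the vertex $\beta$ remains. You instead apply Lemma~\ref{fl} exactly once, to the $\ZZ$-grading obtained by pushing the $\ZZ^k$-grading forward along an integer linear functional that is strictly maximized on the support only at $\beta$; this is a legitimate and arguably cleaner realization of the same reduction, since a vertex of a rational polytope always admits such a supporting functional (clear denominators), and the grouped component $\sum_{\varphi(\gamma)=i}\delta_\gamma$ is indeed the degree-$i$ piece of $\delta$ for the pushed-forward grading. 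Your treatment of part~ii) is the genuinely nontrivial addition: Lemma~\ref{fl}(ii) needs the extremal degree to be nonzero, and your perturbation $N\varphi_1+\psi$ with $N$ exceeding the (integer, hence at least $1$) separation gap times a bound for $\psi$ on the finite support correctly secures both the unique maximization at $\beta$ and $(N\varphi_1+\psi)(\beta)=\psi(\beta)\neq 0$, which uses the hypothesis $\beta\neq 0$ exactly where it is needed. The one-functional route also quietly avoids a real awkwardness of the literal iteration in part~ii): after the first application of Lemma~\ref{fl}(ii) the extracted component is an LND rather than semisimple, so the subsequent steps must switch to part~i) of Lemma~\ref{fl}, and one must take care that the first extremizing direction already has $\varphi(\beta)\neq 0$; your single-step argument handles all of this at once.
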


\begin{de}
The intersection of kernels of all LNDs on $X$ is called {\it the Makar-Limanov} invariant of $X$ and is denoted by $\mathrm{ML}(X)$. 
\end{de}
This invariant is a subalgebra of $\KK[X]$. This subalgebra coincides with the subalgebra of regular $\SAut(X)$-invariants, i.e. $\mathrm{ML}(X)=\KK[X]^{\SAut(X)}$. If $\mathrm{ML}(X)\neq \KK$, then $X$ is not generically flexible. The converse is not true, see~\cite[Section~4.2]{L} for an example of a variety $X$ with $\mathrm{ML}(X)=\KK$, which is not generically flexible. The field of rational $\SAut(X)$-invariants is called the {\it field Makar-Limanov invariant} and is denoted by $\mathrm{FML}(X)$. Generic flexibility is equivalent to $\mathrm{FML}(X)=\KK$. 

\subsection{Neutral component of the automorphism group}

Let us define the neutral component of $\Aut(X)$ following \cite{Ra}.

\begin{de}\label{dd} A family $\{\varphi_b, b\in B\}$ of automorphisms of a variety~$X$, where the parametrizing set $B$ is an irreducible algebraic variety, is an {\it algebraic family} if the map $B\times X\rightarrow X$ given by $(b,x)\mapsto \varphi_b(x)$ is a morphism.
\end{de}

\begin{de}\label{ddd} The neutral component $\Aut(X)^0$ of the group $\Aut(X)$ is the subgroup of
automorphisms that can be included in an algebraic family $\{\varphi_b, b\in B\}$ with an irreducible
variety as a base $B$ such that $\varphi_{b_0}=\mathrm{id}_X$ for some $b_0\in B$.
\end{de}

It is easy to check that $\Aut(X)^0$ is indeed a subgroup, see~\cite{Ra}.

If $G$ is a connected algebraic group and $G\times X\rightarrow X$ is a regular action, then we may take $B = G$ and consider the algebraic family $\{\varphi_g, g\in G\}$, where $\varphi_g(x)=g\cdot x$. So any automorphism defined by an element of $G$ is contained in $\Aut(X)^0$. In particular, every $\GG_a$- and every $\GG_m$-subgroup is contained in $\Aut(X)^0$. Here under $\GG_m$-subgroup we mean an algebraic subgroup of $\Aut(X)$ isomorphic to the multiplicative group of the ground field.

\subsection{Toric varieties}

In this section we give basic facts about toric varieties. More information one can find in ~\cite{CLSch,Ful}. An irreducible algebraic variety $X$ is called {\it toric}, if an algebraic torus $T=(\mathbb{K}^\times)^n$ acts algebraically on $X$ with an open orbit. We can assume the action of $T$ on $X$ to be effective. Note that we do not assume toric varieties to be normal. Let $X$ be affine. An affine toric variety $X$ corresponds to a finitely generated monoid $P$ of weights of $T$-semi-invariant regular functions. Let us identify the group of characters $\mathfrak{X}(T)$ with a free abelian group $M\cong\mathbb{Z}^n$. A vector $m\in\mathbb{Z}^n$ corresponds to the character~$\chi^m$. Since the open orbit on $X$ is isomorphic to $T$, we have an embedding of algebras of regular functions $\mathbb{K}[X]\hookrightarrow\mathbb{K}[T]$. Identifying the algebra $\mathbb{K}[X]$ with its image we obtain the following subalgebra graded by $P$:
$$
\mathbb{K}[X]=\bigoplus_{m\in P}\mathbb{K}\chi^m\subset \bigoplus_{m\in M}\mathbb{K}\chi^m=\mathbb{K}[T].
$$
Let us consider the vector space $M_{\mathbb{Q}}=M\otimes_{\mathbb{Z}}\mathbb{Q}$ over the field of rational numbers. The monoid $P$ generates the cone $\sigma^{\vee}=\mathbb{Q}_{\geq0}P\subset M_{\mathbb{Q}}$. Since~$P$ is finitely generated, the cone $\sigma^\vee$ is a finitely generated cone. Since the action of~$T$ on $X$ is effective, the cone $\sigma^\vee$ is not contained in any proper subspace of $M_\QQ$. 
The variety $X$ is normal if and only if the monoid $P$ is {\it saturated}, i.e. $P=M\cap \sigma^{\vee}$. If $P$ is not saturated, then we say that the monoid $P_{sat}=M\cap \sigma^{\vee}$ is the {\it saturation}  of the monoid~$P$. Elements of $P_{sat}\setminus P$ are called {\it holes} of $P$. Let us give some definitions according to \cite{TY}.

\begin{de}
An element $p$ of the monoid $P$ is called a {\it saturation point} of $P$, if the shifted cone $p+\sigma^{\vee}$ has no hole, i.e. $(p+\sigma^{\vee})\cap M\subset P$. 
 
A face $\tau$ of the cone $\sigma^{\vee}$ is called {\it almost  saturated}, if there is a saturation point of  $P$ in $\tau$. 
Otherwise $\tau$ is called a {\it nowhere saturated} face.
\end{de}
The maximal face, i.e. the whole cone $\sigma^\vee$, is always almost saturated.

We denote the lattice of one-parameter subgroups of the torus $T$ by~$N$. The lattice $N$ is the dual lattice to $M$. We denote the natural pairing
$M\times N\rightarrow \mathbb{Z}$ by $\langle \cdot,\cdot\rangle$. This pairing can be extended to a pairing between vector spaces $N_\QQ=N\otimes_\ZZ\QQ$ and $M_\QQ$. In the space~$N_\QQ$ we define the cone $\sigma$ dual to $\sigma^\vee$, by the rule
$$
\sigma=\{v\in N_\QQ\mid\forall w\in\sigma^\vee : \langle w,v\rangle\geq 0\}.
$$
The finitely generated polyhedral cone $\sigma$ is {\it pointed}, i.e. it does not contain any nonzero subspace.

A toric variety $X$ is called {\it degenerated} if $\KK[X]^\times\neq\KK^\times$. For a toric variety $X$ the following conditions are equivalent: 1) the variety $X$ is non-degenerate; 2)  the cone $\sigma^\vee$ is pointed; 3) the cone $\sigma$ is not contained in any hyperplane. Any toric variety is a direct product of a non-degenerate toric variety and a torus.

There is a bijection between $k$-dimensional faces of $\sigma$ and $(n-k)$-dimensional faces of~$\sigma^\vee$. A face $\tau\preccurlyeq\sigma$ corresponds to the face  $\widehat{\tau}=\tau^{\bot}\cap\sigma^\vee\preccurlyeq \sigma^\vee$. Also there is a bijection between $(n-k)$-dimensional faces of $\sigma^\vee$ and $k$-dimensional $T$-orbits on $X$. A face $\widehat{\tau}\preccurlyeq \sigma^\vee$ corresponds to the orbit, which is open in the set of zeros of the ideal 
$$I_{\widehat{\tau}}=\bigoplus_{m\in P\setminus\widehat{\tau}}\KK\chi^m.$$
The composition of these bijections gives a bijection between $k$-dimensional faces of the cone $\sigma$ and $k$-dimensional $T$-orbits. We denote the orbit corresponding to a face $\tau$  by~$O_\tau$.

\section{Non-normal examples}\label{tws}

Suppose $X$ is a non-normal flexible affine variety such that the singular locus $X^{\mathrm{sing}}$ has codimension one. Then $X$ is generically flexible and each prime divisor containing in $X^{\mathrm{sing}}$ is $\SAut(X)$-invariant. Indeed, $\SAut(X)$ permutes prime divisors containing in $X^{\mathrm{sing}}$. But $\SAut(X)$ is generated by $\GA$-subgroups. Each $\GA$-subgroup is connected, hence, it can not permute nontrivially a finite set of divisors. That is each prime divisor in $X^{\mathrm{sing}}$ is $\GA$-invariant for every $\GA$-subgroup. Hence, it is $\SAut(X)$-invariant. 

In this section we recall a necessary and sufficient condition for a non-normal toric variety to be flexible given in~\cite{BG}. Using this we can easily describe all flexible affine toric varieties that admit a $\SAut(X)$-invariant prime divisor. 

\begin{re}
It follows from results of~\cite{BG} that an affine toric variety can not be generically flexible but not flexible. In particular, all normal non-degenerate affine toric varieties are flexible, see also~\cite{AKZ}. 
\end{re}

Let us give a list of statements from~\cite{BG}. 

\begin{lem}[{Part of~\cite[Lemma~4]{BG}}]
Let $X$ be an affine toric variety, $\sigma$ be the corresponding cone, and $\rho$ be an extremal ray of~$\sigma$. Denote by $O_{\rho}$ the corresponding orbit. Then the following conditions are equivalent:

1) the face $\widehat{\rho}$ of the cone~$\sigma^{\vee}$ is almost saturated;

2) the orbit $O_{\rho}$ consists of smooth points.
\end{lem}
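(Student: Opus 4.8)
The plan is to prove the equivalence by understanding precisely when an extremal ray orbit $O_\rho$ lies in the smooth locus of the (possibly non-normal) affine toric variety $X$, and translating that into a saturation statement about the dual face $\widehat\rho$.

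First I would set up local coordinates near $O_\rho$. Since $\rho$ is an extremal ray of $\sigma$, the dual face $\widehat\rho = \rho^\perp \cap \sigma^\vee$ has codimension one in $\sigma^\vee$, so $O_\rho$ is a divisor in $X$. The orbit $O_\rho$ is the $T$-orbit associated to $\widehat\rho$; points of $O_\rho$ are where all $\chi^m$ with $m \in P \setminus \widehat\rho$ vanish and all $\chi^m$ with $m \in P \cap \widehat\rho$ are nonzero. Because $X$ is toric, its smooth locus is $T$-invariant and is a union of orbits; so $O_\rho$ consists of smooth points if and only if some (equivalently, any) point of $O_\rho$ is smooth. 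I would pick a point $x_0 \in O_\rho$ and compute the completed (or localized) local ring $\OO_{X,x_0}$. Near such a point, the functions $\chi^m$ for $m \in \widehat\rho$ are invertible, so the local structure is governed by the monoid $P$ modulo the subgroup generated by $\widehat\rho \cap M$: concretely, localizing at the multiplicative set of monomials in $P \cap \widehat\rho$ gives an algebra isomorphic to $\KK[P'] \otimes \KK[\widehat\rho \cap M]^{\pm}$-type factor, where $P'$ is the image of $P$ in $M / (\text{subgroup spanned by } \widehat\rho)$. Since $\widehat\rho$ has codimension one and is a face, this quotient is essentially one-dimensional: $M/(\ZZ\langle \widehat\rho\rangle)$ has rank one, and the image of $P$ is a submonoid of $\ZZ_{\ge 0}$ (up to sign), hence of the form $\{0\} \cup \{n \in \ZZ : n \ge n_0\}$ for some $n_0 \ge 1$, i.e. a numerical semigroup truncated to a ray.

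The key computation is then: such a localization is smooth (regular) at the relevant point if and only if the image monoid $P'$ equals all of $\ZZ_{\ge 0}$, i.e. $1 \in P'$; this is the standard fact that $\KK[\ZZ_{\ge 0}] = \KK[t]$ is regular while $\KK[\text{numerical semigroup}]$ with a gap is singular at the origin. Unwinding, $1 \in P'$ means there is an element $p \in P$ whose image in $M/\ZZ\langle\widehat\rho\rangle$ is the positive generator; such a $p$ together with $P \cap \widehat\rho$ generates, over the cone, everything — and this is exactly the condition that $p$ is a saturation point of $P$ lying in a neighborhood of $\widehat\rho$ inside $\sigma^\vee$. More carefully, I would show: $\widehat\rho$ is almost saturated (contains a saturation point $p$ of $P$) $\iff$ there is $p \in \widehat\rho$ with $(p + \sigma^\vee) \cap M \subset P$ $\iff$ the image of $P$ in the rank-one quotient is all of $\ZZ_{\ge 0}$ $\iff$ $O_\rho$ is smooth. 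The forward direction uses that a saturation point $p$ in $\widehat\rho$ forces the shifted cone $p + \sigma^\vee$ to fill out $P$; restricting to a transverse slice recovers $1 \in P'$. For the reverse direction, given $1 \in P'$, lift to $p \in P \cap \widehat\rho$ (using that $\widehat\rho$ is a codimension-one face, one can choose the lift inside $\widehat\rho$) and check the shifted cone has no holes by a direct argument on the two "coordinate" directions.

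The main obstacle will be the bookkeeping around non-normality: making the localization argument precise when $P \ne P_{sat}$, and in particular verifying that smoothness of $O_\rho$ really is detected purely by the rank-one quotient monoid rather than being obstructed by holes elsewhere in $P$ (those other holes sit away from $\widehat\rho$ and get inverted in the localization, so they should not matter — but this needs to be stated cleanly). A secondary subtlety is choosing the saturation point to lie exactly in the face $\widehat\rho$ rather than merely near it; here I would use that $\widehat\rho$ spans a hyperplane and that adding any element of $\rho^\perp \cap \sigma^\vee$ to a saturation point keeps it a saturation point, so one can always push a saturation point into the face. Since this lemma is quoted from~\cite{BG}, I would ultimately cite it, but the sketch above is how I would reconstruct the proof.
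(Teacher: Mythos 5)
The paper offers no proof of this lemma at all: it is quoted verbatim from \cite[Lemma~4]{BG}, so there is nothing internal to compare against, and your sketch must stand on its own. Its overall shape (localize along $O_\rho$, reduce to a monoid computation) is reasonable, but the criterion you extract is wrong. You claim that smoothness of $O_\rho$ is detected by the image of $P$ in the rank-one quotient $M/\ZZ\langle\widehat\rho\rangle$ being all of $\ZZ_{\ge 0}$, and you dismiss the remaining holes on the grounds that they ``get inverted in the localization.'' Localizing $\KK[P]$ at the monomials $\chi^m$, $m\in P\cap\widehat\rho$, yields $\KK[\widetilde P]$ with $\widetilde P=P+G$ and $G=\ZZ(P\cap\widehat\rho)$; the point you miss is that $G$ may be a proper finite-index subgroup of $M\cap\rho^{\perp}$, so holes of $P$ lying \emph{along} the facet are not inverted and do obstruct smoothness even when the rank-one image is all of $\ZZ_{\ge 0}$. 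Concretely, take $M=\ZZ^2$, $P$ generated by $(2,0),(0,1),(1,1)$, so that $P=\ZZ_{\ge0}^2\setminus\{(x,0): x \text{ odd}\}$, and let $\rho=\QQ_{\ge0}(0,1)$, $\widehat\rho=\QQ_{\ge0}(1,0)$. The image of $P$ under the second coordinate is all of $\ZZ_{\ge0}$, yet $P\cap\widehat\rho=\ZZ_{\ge0}(2,0)$, the localization is $\KK[u^{\pm1},t,v]/(v^2-ut^2)$ with $u=\chi^{(2,0)}$, $t=\chi^{(0,1)}$, $v=\chi^{(1,1)}$, and this is singular along $O_\rho=\{t=v=0\}$ (a transverse node); correspondingly $\widehat\rho$ is nowhere saturated because every shifted cone $(2a,0)+\sigma^{\vee}$ contains the hole $(2a+1,0)$. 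The correct criterion is that $P+\ZZ(P\cap\widehat\rho)$ equals the full half-space monoid $\{m\in M:\langle m,v_\rho\rangle\ge0\}$, a condition both transverse to and along the facet; one then checks that effectiveness of the torus action forces the transverse generator to have level one, and that fullness along the facet is exactly what your quotient construction discards.

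A second error sits in your converse direction: you propose to ``push a saturation point into the face'' by adding elements of $\rho^{\perp}\cap\sigma^{\vee}$. Adding anything from $\rho^{\perp}$ to a point $p$ with $\langle p,v_\rho\rangle>0$ leaves $\langle\cdot,v_\rho\rangle$ unchanged, so this move can never land in the face; and since the whole cone $\sigma^{\vee}$ always contains saturation points while $\widehat\rho$ may contain none, no pushing argument of this kind can exist. The standard way to manufacture a saturation point inside $\widehat\rho$ from the smoothness hypothesis is via finiteness of the normalization: $\sigma^{\vee}\cap M$ is a finitely generated $P$-module; for each generator $m_i$ one picks $g_i\in P\cap\widehat\rho$ with $m_i+g_i\in P$ (possible precisely when the localized monoid is the full half-space), and then $q=g_1+\cdots+g_r$ is a saturation point of $P$ lying in $\widehat\rho$.
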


So, there exists a prime divisor $D\subseteq X^{\mathrm{sing}}$ if and only if there exists an extremal ray $\rho$ of $\sigma$ such that the face $\widehat{\rho}$ of the cone~$\sigma^{\vee}$ is nowhere saturated.

\begin{prop}{\cite[Corollary~2]{BG}}
An affine toric variety $X$ is flexible if and only if  there is no hyperplane in $N_\QQ$ containing all extremal rays~$\rho_i$ of the cone  $\sigma$ such that the face $\widehat{\rho_i}$ is almost saturated. 
\end{prop}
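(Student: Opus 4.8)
We may assume $X$ is non-degenerate: otherwise $\sigma$ lies in a hyperplane, so the right-hand condition fails (that hyperplane contains all extremal rays) and so does the left-hand one, since $\KK[X]^{\times}\not\subseteq\KK^{\times}$ forces $\mathrm{ML}(X)\neq\KK$. The plan is to describe the locally nilpotent derivations of $\KK[X]$ in terms of the cone $\sigma$ and the monoid $P$, and then to read off transitivity of $\SAut(X)$ on $X^{\mathrm{reg}}$. As a first reduction, Lemma~\ref{ffll} applied to the $M$-grading $\KK[X]=\bigoplus_{m\in P}\KK\chi^{m}$ shows that every LND $\delta$ has, at each vertex $\beta$ of the convex hull of its support, a homogeneous component $\delta_{\beta}$ that is again an LND; moreover every homogeneous derivation $\partial$ of degree $\gamma\in M$ is of the form $\partial(\chi^{m})=\langle m,v\rangle\chi^{m+\gamma}$ for a fixed $v\in N_{\QQ}$ (the coefficient is additive in $m$ by the Leibniz rule), and $\partial(\KK[X])\subseteq\KK[X]$ amounts to $\{m\in P:m+\gamma\notin P\}\subseteq v^{\perp}$.

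The core of the proof is to classify the pairs $(v,\gamma)$ for which $\partial$ is a nonzero LND. Exactly as in Demazure's description of homogeneous LNDs in the normal case, local nilpotency forces $v$ to be proportional to the primitive generator $p_{\rho}$ of an extremal ray $\rho$ of $\sigma$ and $\gamma=e$ to satisfy $\langle e,p_{\rho}\rangle=-1$ and $\langle e,p_{\rho'}\rangle\geq0$ for the remaining extremal rays $\rho'$; the new feature in the non-normal setting is that such a root $e$ with $\partial$ preserving $P$, not merely $\sigma^{\vee}$, exists if and only if the facet $\widehat\rho$ is almost saturated. Thus the distinguished rays occurring for homogeneous LNDs of $X$ are precisely the extremal rays $\rho$ with $\widehat\rho$ almost saturated --- by the preceding lemma, exactly the rays with $O_{\rho}\subseteq X^{\mathrm{reg}}$ --- and, using the monoid constraint above to handle the non-vertex homogeneous components of an arbitrary LND, one deduces that every monomial $\chi^{m}$ with $m$ orthogonal to all such rays is annihilated by every LND, i.e.\ lies in $\mathrm{ML}(X)$ (or, when $m\notin P$, that $\chi^{m}$ is a rational $\SAut(X)$-invariant). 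Establishing these facts --- in particular manufacturing a genuine root $e$ from a single saturation point on $\widehat\rho$, and controlling those non-vertex components --- is where the holes of $P$ near the facets must be tracked carefully, and is the step I expect to be the main obstacle.

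Granting all this, both implications follow. If the extremal rays $\rho$ with $\widehat\rho$ almost saturated lie in a common hyperplane $u^{\perp}$ with $u\in M\setminus\{0\}$, the previous paragraph produces a nonconstant $\SAut(X)$-invariant ($\chi^{u}\in\mathrm{ML}(X)$ if $u\in P$, a rational invariant otherwise), so $X$ is not generically flexible and in particular not flexible. Conversely, suppose these rays span $N_{\QQ}$. If $\tau$ is a face of $\sigma$ with $O_{\tau}\subseteq X^{\mathrm{reg}}$, then every extremal ray $\rho\preccurlyeq\tau$ is almost saturated: a nowhere saturated $\rho$ would give $O_{\rho}\subseteq X^{\mathrm{sing}}$ by the preceding lemma, hence $\overline{O_{\rho}}\subseteq X^{\mathrm{sing}}$, contradicting $\overline{O_{\rho}}\supseteq O_{\tau}$. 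A standard computation shows that the $\GA$-subgroups attached to roots whose distinguished rays lie among the (almost saturated) rays of $\tau$ move $O_{\tau}$ onto strictly smaller orbits, and after finitely many steps into the open $T$-orbit; hence every point of $X^{\mathrm{reg}}$ is $\SAut(X)$-equivalent to a point of the open orbit. Finally, at a point $p_{0}$ of the open orbit the tangent vectors to the $\GA$-orbits through $p_{0}$ include all generators $p_{\rho}$ of almost saturated extremal rays, which span $T_{p_{0}}X$, so $p_{0}$ is flexible; since flexibility is preserved by $\Aut(X)$ and $T$ acts transitively on the open orbit, every point of it is flexible. An irreducible variety has at most one open $\SAut(X)$-orbit, so this orbit is all of $X^{\mathrm{reg}}$, i.e.\ $X$ is flexible.
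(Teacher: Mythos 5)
This proposition is quoted in the paper from \cite[Corollary~2]{BG} without proof, so there is no internal argument to compare against; your proposal has to be measured against the cited reference, whose argument does run along the lines you describe (homogeneous decomposition, Demazure roots, almost saturation replacing saturation). The shape of your outline is therefore the right one, but the two claims that carry the entire proof are exactly the ones you state and then defer. First, the equivalence ``a homogeneous LND of $\KK[X]$ with distinguished ray $\rho$ exists if and only if $\widehat{\rho}$ is almost saturated'' is the actual content of the cited result: manufacturing a root $e$ from a single saturation point on $\widehat{\rho}$ (and conversely) requires showing that the translates $m+ke$ stay in $P$ for all the needed $k$ while $P$ may have holes arbitrarily deep inside $\sigma^{\vee}$, and nothing in the proposal does this. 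Second, the claim that every monomial $\chi^{u}$ with $u$ orthogonal to all almost-saturated rays is killed by every LND does not follow from the reduction you invoke: Lemma~\ref{ffll} only forces the \emph{vertex} components of $\delta=\sum_{\gamma}\delta_{\gamma}$ to be LNDs, while a component $\delta_{\gamma}$ at a non-vertex $\gamma$ of the support need not be locally nilpotent, so its functional $v_{\gamma}$ is a priori unconstrained and may fail to lie in $u^{\perp}$; then $\delta(\chi^{u})\neq 0$ even though all vertex components annihilate $\chi^{u}$. Some further argument (passing to the normalization and classifying all homogeneous derivations there, or an inductive peeling of extreme components) is needed to close the ``only if'' direction, and you correctly identify this as the main obstacle without overcoming it.

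Two smaller points: additivity of the coefficient in $\partial(\chi^{m})=c_{m}\chi^{m+\gamma}$ only gives $v\in N\otimes_{\ZZ}\KK$, not yet $N_{\QQ}$ (rationality must be extracted from local nilpotency); and in the converse direction the root subgroups move $O_{\tau}$ into strictly \emph{larger} orbits, not smaller ones --- the intended conclusion, that finitely many such moves reach the open $T$-orbit, is correct but again presupposes the unproved existence of roots attached to the extremal rays of $\tau$. As written, the proposal is an accurate road map with the hardest terrain left uncrossed.
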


So, we obtain the following corollary. 
\begin{cor}
Let $X$ be an affine toric variety, and $\sigma$ be the corresponding cone. The variety $X$ is flexible with an $\SAut(X)$-invariant prime divisor if and only if the following conditions are satisfied:

1) there exists an extremal ray $\rho$ of $\sigma$ such that the face $\widehat{\rho}$ of the cone~$\sigma^{\vee}$ is nowhere saturated;

2)  there is no hyperplane in $N_\QQ$ containing all extremal rays $\rho_i$ of the cone  $\sigma$ such that the face $\widehat{\rho_i}$ is almost saturated.
\end{cor}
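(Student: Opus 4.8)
The plan is to deduce the statement directly by combining the two cited results from~\cite{BG} quoted just above, namely the Lemma (Part of~\cite[Lemma~4]{BG}) and the Proposition~\cite[Corollary~2]{BG}, together with the elementary observation about $\SAut(X)$-invariant divisors on flexible varieties made at the start of Section~\ref{tws}. First I would record the equivalence, already noted in the text, that $X$ has a prime divisor $D\subseteq X^{\mathrm{sing}}$ if and only if some extremal ray $\rho$ of $\sigma$ has $\widehat{\rho}$ nowhere saturated: this is an immediate reformulation of the Lemma, since $T$-orbits of codimension one are exactly the orbits $O_{\rho}$ attached to extremal rays, and a codimension-one orbit lies in $X^{\mathrm{sing}}$ precisely when it fails to consist of smooth points. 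Thus condition~1) of the Corollary is equivalent to the existence of a prime divisor contained in the singular locus.

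Next I would argue the forward implication. Suppose $X$ is flexible and carries an $\SAut(X)$-invariant prime divisor $D$. Since $X$ is flexible, $\SAut(X)$ acts transitively on $X^{\mathrm{reg}}$ by~\cite[Theorem~0.1]{AFKKZ}; an orbit of a transitive action has no nonempty proper invariant subset, so an $\SAut(X)$-invariant prime divisor cannot meet $X^{\mathrm{reg}}$, hence $D\subseteq X^{\mathrm{sing}}$. Applying the reformulation above gives condition~1). Condition~2) is just the flexibility criterion of~\cite[Corollary~2]{BG} restated for the given $X$, so it holds automatically. For the converse, assume 1) and 2). Condition~2) is precisely the hypothesis of the Proposition, so $X$ is flexible. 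Condition~1) yields an extremal ray $\rho$ with $\widehat{\rho}$ nowhere saturated, hence by the Lemma the codimension-one orbit $O_{\rho}$ consists of singular points, so its closure $\overline{O_{\rho}}$ is a prime divisor contained in $X^{\mathrm{sing}}$. It remains to check that $\overline{O_{\rho}}$ is $\SAut(X)$-invariant, and here I would invoke the connectedness argument from the opening of Section~\ref{tws}: $\SAut(X)$ permutes the finitely many prime divisors contained in $X^{\mathrm{sing}}$, but $\SAut(X)$ is generated by the connected subgroups $H\cong\GA$, each of which must fix every such divisor since a connected group cannot act nontrivially on a finite set; therefore every prime divisor in $X^{\mathrm{sing}}$, in particular $\overline{O_{\rho}}$, is $\SAut(X)$-invariant.

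The only point requiring a little care is the passage between ``orbit consisting of singular points'' and ``prime divisor in $X^{\mathrm{sing}}$'': one should note that $X^{\mathrm{sing}}$ is closed and $T$-invariant, so it is a union of orbit closures, and an extremal ray $\rho$ of $\sigma$ corresponds to a codimension-one orbit $O_{\rho}$ whose closure $\overline{O_{\rho}}$ is irreducible of codimension one, i.e.\ a prime divisor; conversely any prime divisor in $X^{\mathrm{sing}}$ is $T$-invariant (again because $X^{\mathrm{sing}}$ is, and divisors are the maximal proper irreducible $T$-invariant subsets other than $X$ itself) and hence equals $\overline{O_{\rho}}$ for a unique extremal ray $\rho$. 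I do not anticipate any real obstacle here: the whole statement is a bookkeeping exercise assembling \cite[Lemma~4]{BG}, \cite[Corollary~2]{BG}, the transitivity theorem of~\cite{AFKKZ}, and the connectedness remark, and the ``main obstacle'' is merely making the orbit-divisor dictionary for (possibly non-normal) affine toric varieties explicit enough to justify these identifications.
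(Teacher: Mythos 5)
Your proposal is correct and follows essentially the same route the paper intends: the corollary is obtained by assembling the quoted Lemma (identifying prime divisors in $X^{\mathrm{sing}}$ with extremal rays whose dual faces are nowhere saturated), the flexibility criterion of~\cite[Corollary~2]{BG}, the transitivity of $\SAut(X)$ on $X^{\mathrm{reg}}$ from~\cite{AFKKZ}, and the connectedness argument from the opening of Section~\ref{tws} showing every prime divisor in $X^{\mathrm{sing}}$ is $\SAut(X)$-invariant. Your extra care about the orbit--divisor dictionary for possibly non-normal toric varieties is a reasonable addition but does not change the argument.
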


Let us give an explicit example. 
\begin{ex}
Let $P\subseteq \mathbb{Z}^3$ be the subset consisting of 
\begin{enumerate}
\item all integer points $(a,b,c)$ such that $a,b,c \geq 0$ and $a+b>c$;  
\item integer points $(a,b,a+b)$ such that $a,b \geq 0$ and $a$ is even. 
\end{enumerate}
Then $\sigma$ has four extremal rays $\rho_1=\QQ_{>0}(1,0,0)$, $\rho_2=\QQ_{>0}(0,1,0)$, $\rho_3=\QQ_{>0}(0,0,1)$, $\rho_4=\QQ_{>0}(1,1,-1)$. The faces $\widehat{\rho_1}$, $\widehat{\rho_2}$, and $\widehat{\rho_3}$ of $\sigma^\vee$ are almost saturated, and the face $\widehat{\rho_4}$ is nowhere saturated. Therefore, the corresponding affine toric variety $X$ is flexible with an $\SAut(X)$-invariant prime divisor. 

Let us denote $x=\chi^{(1,0,0)}$, $y=\chi^{(0,1,0)}$, $z=\chi^{(0,0,1)}$. It can be shown that 
\begin{multline*}
\KK[X]=\KK[x,y,x^2z, x^2z^2,yz]=\\=\KK[x,y,u,v,w]/(x^2v-u^2, x^2w-yu, uw-yv),
\end{multline*}
and the singular divisor is $\{x=y=u=0\}$.
\end{ex}

\section{Main results}\label{ms}

Let us consider the following hypersurface in the four-dimensional affine space:
$$
Y=\{xy^4+z^4+w^4=0\}.
$$ 
This is a trinomial hypersurface, which is a particular case of so-called trinomial varieties, see~\cite{H-W}. It is proved in~\cite[Theorem~1.2]{H-W} that such varieties are irreducible and normal. 
Let us fix an integer $m\geq 2$ and consider the direct product $Z=Y\times\mathbb{A}^m$ of $Y$ and an $m$-dimensional affine space. Denote by $u_1, u_2,\ldots, u_m$ coordinates on $\mathbb{A}^m$. Consider a $\mathbb{Z}$-grading $\mathfrak{G}$ on $\KK[Z]$ given by 
$$
\deg x=0,\qquad \deg y=\deg z=\deg w=1,\qquad \deg u_i=-1\text{ for all } i.
$$
This grading corresponds to the following $\KK^\times$-action: 
$$t\cdot (x,y,z,w,u_1,\ldots, u_m)=(x,ty,tz,tw,t^{-1}u_1, \ldots, t^{-1}u_m).$$
Let $X_m$ be the spectrum of the algebra of $\KK^\times$-invariants $\KK[Z]^{\KK^\times}$. It is known that $X_m$ is a normal irreducible variety. The algebra $\KK[X_m]$ is generated by 
$x$, $y_i=yu_i$, $z_i=zu_i$, and $w_i=wu_i$, where $1\leq i\leq m$.  Since generic $\KK^\times$-orbit on $Z$ has dimension one, we have $$\dim X_m=\dim Z-1=m+2.$$

\begin{lem}\label{mainl}
The subvariety $D=\{y_1=\ldots=y_m=0\}$ is a  union of four $\SAut(X_m)$-invariant prime divisors.
\end{lem}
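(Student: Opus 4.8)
The plan is to first exhibit $D$ as a union of four prime divisors, and then to derive $\SAut(X_m)$-invariance from the fact that every $\GA$-subgroup of $\Aut(X_m)$ preserves the subset $D$, using the same connectedness argument as in the opening of Section~\ref{tws}.

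First I would compute $D$. Let $\pi\colon Z\to X_m$ be the categorical quotient, so $\KK[X_m]=\KK[Z]^{\KK^\times}$. The pull-back of $(y_1,\dots,y_m)$ to $\KK[Z]$ is $(yu_1,\dots,yu_m)$, whence $\pi^{-1}(D)=\{y=0\}\cup(Y\times\{0\})$ inside $Z$. Factoring $z^4+w^4=\prod_{j=1}^{4}(z-\omega_j w)$ with $\omega_1,\dots,\omega_4$ the four roots of $t^4=-1$, the subset $\{y=0\}$ of $Z$ is the union of the four closed $\KK^\times$-invariant subsets $\{y=0,\ z=\omega_j w\}\times\AA^m\cong\AA^{m+2}$, and the image under $\pi$ of the $j$-th of them is
$$
D_j=\{\,y_1=\dots=y_m=0,\ z_i=\omega_j w_i\ \text{for all}\ i\,\}\subseteq X_m .
$$
Since $(x,w_1,\dots,w_m)$ are free coordinates on $D_j$, each $D_j\cong\AA^{m+1}$ is irreducible of dimension $\dim X_m-1$, hence a prime divisor; the $\omega_j$ being distinct, the $D_j$ are pairwise distinct, and any two of them meet precisely along $L=\{y_i=z_i=w_i=0\}\cong\AA^1$. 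As $\pi(Y\times\{0\})=L\subseteq D_j$ for all $j$, this gives $D=D_1\cup D_2\cup D_3\cup D_4$.

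For the invariance it will be enough to prove that $\delta(y_i)\in(y_1,\dots,y_m)$ for every LND $\delta$ of $\KK[X_m]$ and every $i$: then $\delta$ stabilizes $(y_1,\dots,y_m)$ and hence its radical $I(D)$, so every $\exp(s\delta)$, hence every $\GA$-subgroup and hence $\SAut(X_m)$, preserves the closed set $D$; a $\GA$-subgroup then permutes the four irreducible components $D_1,\dots,D_4$ and, being connected, cannot do so non-trivially, so each $D_j$ is $\SAut(X_m)$-invariant. To prove the inclusion I would use the $\ZZ^m$-grading of $\KK[X_m]$ with $\deg x=0$ and $\deg y_i=\deg z_i=\deg w_i=e_i$ (one checks it is compatible with the relations); for it, $\KK[X_m]$ is supported in $\ZZ^m_{\ge 0}$, $\KK[X_m]_0=\KK[x]$, the ideal $(y_1,\dots,y_m)$ is homogeneous, and $X_m$ is a normal affine $\TT$-variety of complexity one. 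Since the ideal is homogeneous it is enough to handle each homogeneous component $\delta_\gamma$ of $\delta$; the components at the vertices of the Newton polytope of $\delta$ are LNDs by Lemma~\ref{ffll}, so one first classifies the finitely many homogeneous LNDs of $\KK[X_m]$ and checks these directly, and disposes of the remaining components by a similar but easier argument. In this classification one exploits that differentiating $y_iz_k=y_kz_i$ and $y_iw_k=y_kw_i$ gives $\delta(y_i)z_k\equiv\delta(y_k)z_i$ and $\delta(y_i)w_k\equiv\delta(y_k)w_i$ modulo $(y_1,\dots,y_m)$, so that along each $D_j$ (where $z_i=\omega_j w_i$) the element $\delta(y_i)$ is, modulo $(y_1,\dots,y_m)$, divisible by $w_i$ with a quotient independent of $i$ --- by unique factorization in $\KK[D_j]\cong\KK[x,w_1,\dots,w_m]$, using $m\ge 2$; and then, invoking the trinomial relations $xy_i^4+z_i^4+w_i^4=0$ together with local nilpotency, one shows this quotient vanishes. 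Carrying out this classification of LNDs of $\KK[X_m]$ --- in particular ruling out a derivation whose value on $y_i$ is, modulo $(y_1,\dots,y_m)$, a nonzero multiple of $w_i$ along some $D_j$ --- is the technical heart of the argument, and the step I expect to be the main obstacle.
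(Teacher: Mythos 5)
Your decomposition of $D$ into the four components $D_j$ and the final connectedness argument (a $\GA$-subgroup cannot permute the four components non-trivially) are correct and coincide with the paper's. The reduction of invariance to the statement $\delta(y_i)\in(y_1,\dots,y_m)$ for every LND $\delta$ is also the paper's reduction. But that statement is the entire content of the lemma, and your plan does not prove it: you defer it to a ``classification of the finitely many homogeneous LNDs of $\KK[X_m]$'' which you yourself flag as the main obstacle. Moreover, the $\ZZ^m$-grading you choose ($\deg y_i=\deg z_i=\deg w_i=e_i$, $\deg x=0$) is not adapted to the problem: it does not distinguish $y_i$ from $z_i$ and $w_i$, so homogeneity of a component $\delta_\gamma$ gives no control over whether $\delta_\gamma(y_i)$ lies in $(y_1,\dots,y_m)$ rather than, say, in $(z_1,\dots,z_m)$. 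The sketched divisibility argument via $y_iz_k=y_kz_i$ and unique factorization in $\KK[D_j]$ is a heuristic, not a proof, and I see no way to close it along those lines without essentially redoing the missing classification.

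The paper's actual mechanism is different and is the idea you are missing. It uses the $\ZZ$-grading $\mathfrak F$ with $\deg x=-4$, $\deg y_i=1$, $\deg z_i=\deg w_i=0$, for which an element of positive degree automatically lies in $(y_1,\dots,y_m)$. By Lemma~\ref{fl} the lowest homogeneous component of any LND is again an LND, so it suffices to rule out homogeneous LNDs of negative $\mathfrak F$-degree. For such a $\delta$ one has $\deg\delta(x)<-4$, which forces $x\mid\delta(x)$ and hence $\delta(x)=0$; passing to $\LL=\overline{\KK(x)}$ turns $X_m$ into the analogous quotient of the Fermat-type hypersurface $\widehat y^4+\widehat z^4+\widehat w^4=0$, whose generators satisfy $\widehat y_i^4+\widehat z_i^4+\widehat w_i^4=0$, and the ABC theorem (since $\tfrac14+\tfrac14+\tfrac14<1$) puts them all in the Makar-Limanov invariant, so $\delta=0$. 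Then every homogeneous component of an arbitrary LND has degree $\geq 0$, whence $\delta(y_i)$ has all components of degree $\geq 1$ and lies in $(y_1,\dots,y_m)$. Without this (or an equivalent) argument, your proposal has a genuine gap at its central step.
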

\begin{proof}
Let us prove that the subset $D$ is $\SAut(X_m)$-invariant. This is equivalent to that the ideal $I=(y_1,\ldots, y_m)\subseteq \KK[X_m]$ is $\SAut(X_m)$-invariant.

Consider one more $\mathbb{Z}$-grading on $\KK[Z]$ given by 
$$
\deg{x}=-4, \qquad \deg y=1,\qquad \deg z=\deg w=\deg u_i=0 \text{ for all }i. 
$$
This grading induces the following $\ZZ$-grading  $\mathfrak{F}$ on $\KK[X_m]$:
$$
\deg x=\!-4, \deg y_i=1, \deg z_i=\deg w_i=0 \text{ for all }i.
$$
Suppose $\delta$ is an  $\mathfrak{F}$-homogeneous LND of $\KK[X_m]$ with negative degree. Then $\deg \delta(x)<\deg x=-4<0$. Therefore, $x\mid \delta(x)$. This implies $\delta(x)=0$, see~\cite[Principle~5]{Fr}. Let us consider the field $\mathbb{L}=\overline{\KK(x)}$, which is the algebraic closure of $\KK(x)$. Then $\delta$ induces a nonzero LND $\widehat{\delta}$ on the variety $X_m(\mathbb{L})$ over $\mathbb{L}$. But changing coordinates we can assume that $\mathbb{L}[X_m(\mathbb{L})]=\mathbb{L}[\widehat{Z}]^{\mathbb{L}^\times}$, where $\widehat{Z}$ is given in ($m+3$)-dimensional affine space with coordinates $\widehat{y},\widehat{z},\widehat{w},\widehat{u_1},\ldots,\widehat{u_m}$ by $\widehat{y}^4+\widehat{z}^4+\widehat{w}^4=0$ and 
$$t\cdot(\widehat{y},\widehat{z},\widehat{w},\widehat{u_1},\ldots,\widehat{u_m})=(t\widehat{y},t\widehat{z},t\widehat{w},t^{-1}\widehat{u_1},\ldots,t^{-1}\widehat{u_m}).$$   Therefore, the algebra $\mathbb{L}[X_m(\mathbb{L})]$ is generated by 
$$\widehat{y}_i=\widehat{y}\widehat{u_i},\qquad \widehat{z}_i=\widehat{z}\widehat{u_i},\qquad \widehat{w}_i=\widehat{w}\widehat{u_i}.$$
We have $\widehat{y}_i^4+\widehat{z}_i^4+\widehat{w}_i^4=0$. Since $\frac{1}{4}+\frac{1}{4}+\frac{1}{4}<1$, by the ABC-Theorem we have $\widehat{y}_i, \widehat{z}_i, \widehat{w}_i\in \mathrm{ML}(X(\mathbb{L}))\subseteq\mathrm{Ker}\,\widehat{\delta}$ , see~\cite[Theorem~2.48]{Fr}. This implies $\widehat{\delta}=0$. So, we obtain a contradiction. Hence, there is no LND~$\delta$ on $\KK[X_m]$ with negative  $\mathfrak{F}$-degree. 

Now let $\delta$ be an arbitrary LND on $X_m$. Consider the decomposition $\delta=\sum\limits_{i=l}^k\delta_i$ onto  a sum of $\mathfrak{F}$-homogeneous derivations.  By Lemma~\ref{fl}, the derivation $\delta_l$ is locally nilpotent. Therefore, $l\geq 0$. Hence, for each summand $\delta_j$ we have $j\geq 0$. Then $\deg\delta_j(y_i)=1+j>0$. Therefore, $\delta_j(y_i)\in I$. This implies $\delta(y_i)\in I$. Thus, $I$ is $\delta$-invariant. Since $\delta$ is an arbitrary LND, $I$ is $\SAut(X)$-invariant.

Let $\varepsilon_1,\varepsilon_2, \varepsilon_3, \varepsilon_4$ be all 4-roots of $-1$. Let us prove that $D$ is the union of four subsets: $D_1$, $D_2$, $D_3$, and $D_4$, where
$$
D_j=\{y_i=z_i+\varepsilon_jw_i=0\text{ for all }i\}.
$$
Indeed, let $p$ be a point on $D$. Since the quotient mapping $\pi\colon Z\rightarrow X_m$ is surjective, we can take $q\in Z$ such that $\pi(q)=p$. If for all $i$ we have $u_i(q)=0$, then $z_i(p)=w_i(p)=0$ and $p$ belongs to all $D_j$. Suppose there exists $1\leq s\leq m$ such that $u_s(q)\neq 0$. Since $y_s(p)=0$, we obtain $y(q)u_s(q)=0$, hence $y(q)=0$. This implies $z^4(q)+w^4(q)=0$. Hence, for some $\varepsilon_j$ we have $z(q)+\varepsilon_jw(q)=0$. So, $z_i(p)+\varepsilon_jw_i(p)=0$ for all~$i$. Therefore, $p\in D_j$. Note that $\KK[D_j]$ is a polynomial algebra in variables $x,z_1,\ldots, z_m$. So, $D_j$ is a prime divisor isomorphic to the affine space~$\mathbb{A}^{m+1}$. 

Since $D=D_1\cup D_2\cup D_3\cup D_4$ is $\SAut(X_m)$-invariant, the group $\SAut(X_m)$ permutes~$D_j$. Therefore, each~$D_j$ is $\SAut(X_m)$-invariant. 

\end{proof}

We need the following elementary lemma. 

\begin{lem}\label{el}
Suppose a point $p$ of an affine variety $X$ belongs to a closed $\SAut(X)$-invariant subvariety $S\subseteq X$ of dimension $d$. Then the linear span of tangent vectors to $\GA$-orbits at $p$ has dimension $\leq d$.
\end{lem}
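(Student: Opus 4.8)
The plan is to exploit the fact that any $\GA$-action on $X$ preserves the closed invariant subvariety $S$, hence restricts to a well-defined vector field that is everywhere tangent to $S$. Concretely, let $\delta$ be an LND on $\KK[X]$ corresponding to a $\GA$-subgroup $H$, and let $J\subseteq\KK[X]$ be the radical ideal defining $S$. Since $S$ is $\SAut(X)$-invariant, it is in particular $H$-invariant, so $\delta(J)\subseteq J$. Therefore $\delta$ descends to a derivation $\overline{\delta}$ of $\KK[X]/J=\KK[S]$, and this $\overline{\delta}$ is again locally nilpotent (local nilpotency passes to quotients). Geometrically, the $\GA$-orbit of $p$ under $H$ lies entirely inside $S$, so the tangent vector to that orbit at $p$ lies in $\mathrm{T}_pS$.

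The key step is then purely linear-algebraic: every tangent vector at $p$ to an orbit of a $\GA$-subgroup lies in the subspace $\mathrm{T}_pS\subseteq\mathrm{T}_pX$. Indeed, for a $\GA$-subgroup $H$ with LND $\delta$, the orbit map $\GA\to X$, $s\mapsto\exp(s\delta)\cdot p$, has image contained in $S$ (because $S$ is $H$-invariant and $p\in S$), so its differential at $s=0$, which is exactly the tangent vector to the $H$-orbit at $p$, factors through $\mathrm{T}_pS$. Consequently the linear span of all such tangent vectors, taken over all $\GA$-subgroups of $\Aut(X)$, is contained in $\mathrm{T}_pS$, whence its dimension is at most $\dim\mathrm{T}_pS$.

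Finally I would bound $\dim\mathrm{T}_pS$ by $d=\dim S$ is \emph{not} automatic at a singular point of $S$, so here one must be slightly careful: the statement of the lemma as phrased should be read with the understanding that what is really needed downstream is the containment in $\mathrm{T}_pS$, and the bound "$\leq\dim S$" is used only where $S$ is smooth at $p$ (which will be the case in the applications, e.g. $S=L$ an affine line). If one wants the literal inequality $\leq d$ without smoothness hypotheses, one argues instead that the orbit of $p$ under $\SAut(X)$ is contained in $S$ and is locally closed of dimension $\leq d$; and since $X$ is affine, a finite product of $\GA$-subgroups already realizes a subset of this orbit whose tangent space at $p$ has the same dimension as the span of all orbit-tangent vectors (this is the standard argument, as in \cite{AFKKZ}, that the span of orbit tangents equals the tangent space to the $\SAut(X)$-orbit), and that tangent space has dimension $\leq\dim(\text{orbit})\leq d$.

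The main obstacle is precisely this last point — ensuring the passage from "the span of tangent vectors to individual $\GA$-orbits" to "the tangent space of the $\SAut(X)$-orbit through $p$", so that the dimension bound by $d$ is legitimate even when $S$ (or its $\SAut(X)$-orbit closure) is singular at $p$. The cleanest route is to invoke the result from \cite{AFKKZ} that each $\SAut(X)$-orbit is locally closed and that its tangent space at any of its points is spanned by the tangent vectors to the $\GA$-orbits through that point; combining this with $\SAut(X)\cdot p\subseteq S$ and hence $\dim(\SAut(X)\cdot p)\leq d$ gives the claim immediately.
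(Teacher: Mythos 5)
Your argument is correct, but it resolves the key difficulty --- that $\dim\mathrm{T}_pS$ may exceed $d$ when $p$ is a singular point of $S$ --- by a genuinely different route than the paper. You import the orbit machinery of \cite{AFKKZ}: the set of all locally nilpotent vector fields is saturated, so $\SAut(X)$-orbits are locally closed and smooth and the tangent space of the orbit through $p$ equals the span of the values at $p$ of all such fields; since $\SAut(X)\cdot p\subseteq S$ forces $\dim\bigl(\SAut(X)\cdot p\bigr)\leq d$, the bound follows. The paper instead gives a short, self-contained induction on $d$: if $p$ is a regular point of $S$, the containment of the span in $\mathrm{T}_pS$ already gives the bound; if $p$ is singular, then $S^{\mathrm{sing}}$ is again a closed $\SAut(X)$-invariant subvariety of dimension $\leq d-1$ (every $\GA$-action restricts to $S$ and preserves its singular locus), and one descends. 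Both are valid; the paper's route needs nothing beyond the invariance of the singular locus, while yours buys the stronger conclusion that the span actually equals the tangent space of the smooth, locally closed orbit. One caution: your suggestion that the literal inequality ``$\leq d$'' could be weakened to the containment in $\mathrm{T}_pS$ because $S$ is smooth at $p$ in the applications is not accurate --- in Lemma~\ref{ood} the lemma is applied to the closure of an $\SAut(X_m)$-orbit, and in Lemma~\ref{esl} to $X^{\mathrm{sing}}$, neither of which is known to be smooth at $p$ --- so the full inequality, which you do eventually establish via \cite{AFKKZ}, is genuinely needed.
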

\begin{proof}
Let us prove this assertion by induction on $d$. 

If $d=0$, then each $\GA$-orbit of $p$ is trivial and tangent vectors to $\GA$-orbits span the zero-dimensional space.

Suppose for $d<d_0$ the assertion is proved. Let us prove it for $d=d_0$. If $p$ is a regular point of $S$, then $\dim \mathrm{T}_pS=d$. Since all tangent vectors to $\GA$-orbits at $p$ belong to $\dim \mathrm{T}_pS$, the linear span of these vectors has dimension $\leq d$. Now, let $p$ be a singular point of $S$. Then $p\in S^{\mathrm{sing}}$, where $\dim S^{\mathrm{sing}}\leq d-1$. Each $\GA$-action on $X$ induces a $\GA$-action on $S$, which preserves $S^{\mathrm{sing}}$. Therefore, $S^{\mathrm{sing}}$ is a  $\SAut(X)$-invariant subvariety of $X$. By inductive hypothesis, the linear span of tangent vectors to $\GA$-orbits at $p$ has dimension $\leq d-1$.
\end{proof}

An elementary consequence of the previous lemma is the following statement. 
\begin{lem}\label{esl}
Let $X$ be a normal affine variety of dimension $n$. If the linear span of tangent vectors to $\GA$-orbits at a point $p\in X$ has dimension $\geq n-1$, then $p$ is a regular point. 
\end{lem}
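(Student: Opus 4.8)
The plan is to prove the contrapositive: if $p \in X$ is a singular point, then the linear span of tangent vectors to $\GA$-orbits at $p$ has dimension at most $n-1$. So suppose $p \in X^{\mathrm{sing}}$. Since $X$ is normal of dimension $n$, its singular locus $X^{\mathrm{sing}}$ is a closed subvariety of codimension at least two, hence $\dim X^{\mathrm{sing}} \leq n-2$. The key point is that $X^{\mathrm{sing}}$ is $\SAut(X)$-invariant: every $\GA$-action on $X$ is by automorphisms, so it preserves the regular and singular loci; therefore $X^{\mathrm{sing}}$ is invariant under every $\GA$-subgroup, and hence under the group $\SAut(X)$ they generate. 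We may thus apply Lemma~\ref{el} with $S = X^{\mathrm{sing}}$ and $d = \dim X^{\mathrm{sing}} \leq n-2$, concluding that the linear span of tangent vectors to $\GA$-orbits at $p$ has dimension $\leq n-2 < n-1$. This contradicts the hypothesis, so $p$ must be regular.

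A small technical caveat: if $X^{\mathrm{sing}}$ is reducible, one should apply Lemma~\ref{el} to the irreducible component of $X^{\mathrm{sing}}$ through $p$ (which is still $\SAut(X)$-invariant, since a connected group cannot permute components nontrivially, or one simply notes that Lemma~\ref{el} as stated only requires $S$ to be a closed $\SAut(X)$-invariant subvariety of the stated dimension, and $X^{\mathrm{sing}}$ itself qualifies). Either way the dimension bound $d \leq n-2$ is what matters. Normality enters only here, to guarantee $\mathrm{codim}\, X^{\mathrm{sing}} \geq 2$; without normality the singular locus could be a divisor and the statement would fail.

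There is no real obstacle: the statement is, as the paper says, an elementary consequence of Lemma~\ref{el} together with Serre's criterion / the standard fact that a normal variety is regular in codimension one. The only thing to be careful about is not to conflate "the tangent space to $X$ at $p$" (which has dimension $> n$ when $p$ is singular) with "the span of tangent vectors to $\GA$-orbits at $p$"; it is precisely the invariance of $X^{\mathrm{sing}}$ under $\SAut(X)$, fed into the inductive Lemma~\ref{el}, that controls the latter and keeps it inside the much smaller tangent space of $X^{\mathrm{sing}}$.
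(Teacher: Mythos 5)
Your proof is correct and follows the paper's argument exactly: normality gives $\dim X^{\mathrm{sing}}\leq n-2$, the singular locus is $\SAut(X)$-invariant, and Lemma~\ref{el} then bounds the span of tangent vectors to $\GA$-orbits at a singular point by $n-2$, contradicting the hypothesis. The remarks on reducibility of $X^{\mathrm{sing}}$ and on where normality is used are accurate but not needed beyond what the paper states.
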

\begin{proof}
Since $X$ is normal, the codimension of $X^{\mathrm{sing}}$ is al least 2. So, $X^{\mathrm{sing}}$ is a closed $\SAut(X)$-invariant subvariety of dimension $\leq n-2$. 
\end{proof}

Let us investigate $\Aut(X_m)$-orbits on $X_m$.  First of all we need to write down explicitly some LNDs on $X_m$.   

\begin{const}\label{vc}
Let us consider the following LNDs $\xi_z, \xi_w$, and $\zeta_{ij}$, $1\leq i\neq j\leq m$ on $Z$:
$$
\xi_z(x,y,z,w,u_1,\ldots,u_m)=(4z^3u_1^3,0,y^4u_1^3,0,0,\ldots,0);
$$
$$
\xi_w(x,y,z,w,u_1,\ldots,u_m)=(4w^3u_1^3,0,0,y^4u_1^3,0,\ldots,0).
$$
The LND $\zeta_{ij}$ is defined by $\zeta_{ij}(u_i)=u_j$ and $\zeta_{ij}$ equals zero on all other variables. 

All these LNDs are homogeneous of degree zero with respect to the $\ZZ$-grading $\mathfrak{G}$. Hence, they define the following LNDs of~$\KK[X]$:
$$
\delta_z(x)=4z_1^3, \qquad \delta_z(z_1)=y_1^4, \qquad   \delta_z(z_i)=  y_1^3y_i  \text{ for }i>1;   
$$
$$
\delta_w(x)=4w_1^3, \qquad \delta_w(w_1)=y_1^4, \qquad   \delta_w(w_i)=  y_1^3y_i  \text{ for }i>1;   
$$
$$
\rho_{ij}(y_i)=y_j,\qquad \rho_{ij}(z_i)=z_j, \qquad \rho_{ij}(w_i)=w_j.
$$
\end{const}

\begin{lem}\label{ooo}
The set $\mathcal{O}=X_m\setminus D$ is one $\SAut(X_m)$-orbit.
\end{lem}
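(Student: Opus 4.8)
The plan is to show that $\mathcal{O}=X_m\setminus D$ is a single orbit by first checking it is $\SAut(X_m)$-invariant, then checking the action on it is transitive. Invariance is immediate: by Lemma~\ref{mainl} the set $D$ is $\SAut(X_m)$-invariant, hence so is its complement. For transitivity, I would exhibit enough LNDs to move any point of $\mathcal{O}$ to a fixed base point, using the explicit derivations $\delta_z$, $\delta_w$, $\rho_{ij}$ from Construction~\ref{vc} together with the $\GG_m$-action used to define $X_m$ and possibly a few more LNDs obtained by the same "homogeneous LND on $Z$ descends to $X_m$" recipe (for instance derivations raising $u_i$ by a multiple of $z$ or $w$, or the obvious translations in the $u_i$-directions coming from LNDs of $Y\times\AA^m$ along $\AA^m$).

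The key steps, in order, are as follows. First, fix a point $p\in\mathcal{O}$; by definition of $D$ we have $y_i(p)\neq 0$ for some $i$, and using the $\rho_{ij}$ we may permute coordinates so that $y_1(p)\neq 0$. Second, lift $p$ to a point $q=(x,y,z,w,u_1,\dots,u_m)\in Z$ with $\pi(q)=p$; then $y(q)\neq 0$ and $u_1(q)\neq 0$, and the coordinates $y_i(p)=y u_i$, $z_i(p)=z u_i$, $w_i(p)=w u_i$ of $p$ are determined by the ratios $u_i/u_1$ together with the product $y u_1$ (and similarly $z u_1$, $w u_1$, which satisfy $(y u_1)^4+(z u_1)^4+(w u_1)^4 = x y^4 u_1^4 + \ldots$—careful, one must track the relation $xy^4+z^4+w^4=0$). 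So transitivity on $\mathcal{O}$ reduces to: (a) normalizing the "direction" $[u_1:\dots:u_m]$ and the ratios $z/y$, $w/y$, which is where $\delta_z$, $\delta_w$ and the $u_i$-translation LNDs act; and (b) normalizing the remaining scaling parameter, which is where the torus $\GG_m\subset\Aut(X_m)^0$ enters—but since we want $\SAut$, not $\Aut^0$, one must instead produce this last normalization from a product of two LNDs whose commutator or composite rescales the relevant function (a standard trick: a $\GG_m$-action that normalizes an $\SAut$-orbit can often be absorbed because $\SAut$-orbits are already homogeneous under it, or one uses a "modification" LND whose flow achieves the needed rescaling along the orbit).

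Concretely, I would proceed by picking the base point $p_0$ with coordinates $x=0$, $y_1=z_1=1$ say (chosen to lie on $\mathcal{O}$ and on a convenient $D_j$-complement), and then: (1) use $\delta_z$ to kill or set the $x$-coordinate and adjust $z_1$; (2) use $\delta_w$ to adjust $w_1$; (3) use the LNDs moving $u_i\mapsto u_i + (\text{scalar})u_1$ and $\rho_{1i}$ to zero out $y_2,\dots,y_m$ relative to $y_1$ (i.e.\ collapse the $u$-direction to the first coordinate); (4) finally use the $\GG_m$-action — or rather note that after steps (1)–(3) the orbit already contains a whole $\GG_m$-curve through the reduced point, so all points with $y_1\neq 0$ in the reduced locus are in one orbit. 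Throughout, one checks these flows stay inside $\mathcal{O}$ (they do, since $\mathcal{O}$ is invariant and the flows are defined everywhere).

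The main obstacle I expect is the last normalization — getting the overall scaling parameter without invoking $\GG_m$ (which lies in $\Aut^0$ but a priori not in $\SAut$). The resolution should be that $D_j\cong\AA^{m+1}$ and $\mathcal{O}$ fibers nicely, so that on $\mathcal{O}$ one can find an LND (again descended from a suitable homogeneous LND on $Z$, e.g.\ one involving $\delta_z^2$ or a combination acting nontrivially on $x$ while the point has $y_1\neq 0$) whose one-parameter flow is transverse to the already-reached sub-orbit and supplies the missing direction; alternatively, one shows directly that the composite of two of the explicit LNDs already realizes a reparametrized $\GG_m$-orbit inside a single $\SAut$-orbit. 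Verifying that the explicit LNDs of Construction~\ref{vc}, their permuted analogues, and the $u_i$-translations together span a transitive pseudogroup on $\mathcal{O}$ — i.e.\ that at each $p\in\mathcal{O}$ the tangent vectors to these $\GA$-orbits span $\mathrm{T}_p X_m$ (which by Lemma~\ref{esl} also reproves $\mathcal{O}\subseteq X_m^{\reg}$) — is the computational heart of the argument.
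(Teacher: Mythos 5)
There is a gap, and it is a gap of strategy rather than of computation: you have assembled the right ingredient but not the argument that makes it suffice, and the argument you do sketch is incomplete. The paper's proof does not establish transitivity by explicitly moving an arbitrary point of $\mathcal{O}$ to a base point. It observes that it is enough to show every $p\in\mathcal{O}$ is \emph{flexible}: by \cite{AFKKZ} the $\SAut(X_m)$-orbit of a flexible point is open, two nonempty open subsets of the irreducible variety $X_m$ meet, so all flexible points lie in a single open orbit; since $\mathcal{O}$ is $\SAut(X_m)$-invariant (Lemma~\ref{mainl}), that orbit is exactly $\mathcal{O}$. The flexibility of $p$ is precisely what you call the ``computational heart'' (the vector fields $\delta_z,\delta_w,\rho_{12},\rho_{21},\rho_{31},\dots,\rho_{m1}$ give $m+2=\dim X_m$ independent tangent vectors when all $y_i(p)\neq 0$, and a general $p\in\mathcal{O}$ is moved to such a point by the flows $\exp(s\rho_{ij})$; Lemma~\ref{esl} then also gives regularity). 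So the one missing idea is the implication ``every point of an invariant set is flexible $\Rightarrow$ the set is one orbit,'' which replaces your entire normalization scheme.

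Without that implication, your explicit route is not carried through, and its hardest step is left genuinely unresolved. You correctly flag that the $\GG_m$-action defining $X_m$ lies in $\Aut(X_m)^0$ but not a priori in $\SAut(X_m)$, so it cannot be used to normalize the ``overall scaling parameter''; but the proposed fixes do not work as stated. A composite $\exp(\delta)\exp(\delta')$ of two unipotent flows is not a one-parameter multiplicative group, and there is no general principle that a $\GG_m$-orbit through a point of an $\SAut$-orbit is absorbed into that orbit (indeed, on $L$ in this very example the $\GG_m$-action is nontrivial while $\SAut(X_m)$ fixes every point). Steps (1)--(4) of your normalization are also only named, not verified (e.g.\ whether $\delta_z$, whose value on $x$ is $4z_1^3$ and on $z_1$ is $y_1^4$, can simultaneously ``set'' $x$ and ``adjust'' $z_1$ is a nontrivial claim about the flow, not a statement about the tangent vector). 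All of this effort is unnecessary once you invoke openness of orbits of flexible points.
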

\begin{proof}
It is sufficient to show that each point $p$ in~$\mathcal{O}$ is flexible. Indeed, the $\SAut(X_m)$-orbit of a flexible point is open and since $X_m$ is irreducible, all flexible points belong to the unique open orbit.

Firstly let us prove that if $y_i(p)\neq 0$ for all $1\leq i\leq m$, then $p$ is a flexible point. 
 Let us use LNDs $\delta_z,\delta_w, \rho_{12}, \rho_{21}, \rho_{31},\ldots ,\rho_{m1}$ from Construction~\ref{vc}. 
 Suppose 
 $$\mu_z\delta_z(p)+\mu_w\delta_w(p)+\lambda_1 \rho_{12}(p)+\lambda_2 \rho_{21}(p)+\ldots+\lambda_m\rho_{m1}(p)=0.$$
 Since
 $
 \rho_{ij}(y_i)=y_j 
 $
 and all other considered LNDs take $y_i$ to zero, we have $\lambda_i=0$. But $\delta_z(z_1)=y_1^4$, $\delta_w(z_1)=0$, $\delta_w(w_1)=y_1^4$, $\delta_z(w_1)=0$. So, $\mu_1=\mu_2=0$. Thus, we have proved that all the considered LNDs are linearly independent at $p$. Since $\dim X_m=m+2$, by Lemma~\ref{esl}, $p$ is regular. We conclude that $p$ is flexible.  So, all points with  $y_1(p)\neq 0,\ldots, y_m(p)\neq 0$ are flexible. 
 
Now let us take an arbitrary point $p\notin D$. There exists $j$ such that $y_j(p)\neq 0$. Suppose  $y_i(p)=0$. Then there exists $s\in \KK\setminus\{0\}$ such that for $q=\exp(s\rho_{ij})(p)$ we have $y_i(q)\neq 0$. Note that for any $k\neq i$ we have $y_k(q)=y_k(p)$.  Applying such automorphisms several times we obtain a point $r\in X_m\setminus D$ such that $y_i(r)\neq 0$ for all $i$, i.e. $r$ is flexible. Since we take $p$ to $r$ by automorphism, the point $p$ is also flexible.  
 \end{proof}

Let $L$ be the line 
$$\{y_1=\ldots=y_m=z_1=\ldots=z_m=w_1=\ldots=w_m=0\}\subset X_m.$$ 

\begin{lem}\label{sipo}
All points in $L$ are singular in $X$. Moreover, $\dim \!\mathrm{T}_pX_m=3m+1$ for every $p\in L$. 
\end{lem}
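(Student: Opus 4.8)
The plan is to compute the Zariski tangent space $\mathrm{T}_pX_m=(\mathfrak{m}_p/\mathfrak{m}_p^2)^{*}$ directly, using a grading on $\KK[X_m]$ for which the points of $L$ are fixed. Besides the $\KK^\times$-action defining $X_m$, I would use the $\KK^\times$-action on $Z$ given by $s\cdot(x,y,z,w,u_1,\ldots,u_m)=(x,y,z,w,su_1,\ldots,su_m)$; it preserves $Z$ and commutes with the defining action, hence descends to $X_m$ and makes $\KK[X_m]$ an $\NN$-graded algebra with $\deg x=0$ and $\deg y_i=\deg z_i=\deg w_i=1$. Then $\KK[X_m]=\bigoplus_{d\geq 0}R_d$ with $R_0=\KK[x]$, and for $d\geq 1$ the space $R_d$ is spanned over $\KK[x]$ by the degree-$d$ monomials in the $y_i,z_i,w_i$; in particular the irrelevant ideal $\mathfrak{n}=\bigoplus_{d\geq 1}R_d$ is generated by the $3m$ elements $y_i,z_i,w_i$, so $\mathfrak{n}^2=\bigoplus_{d\geq 2}R_d$ and $\mathfrak{n}/\mathfrak{n}^2\cong R_1$ as $\KK[x]$-modules.

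The main point, and the only step that is not formal bookkeeping, is to show that $R_1$ is a free $\KK[x]$-module of rank $3m$, i.e., that there is no nontrivial relation $\sum_i\bigl(f_i(x)y_i+g_i(x)z_i+h_i(x)w_i\bigr)=0$ in $\KK[X_m]$. I would pull such a relation back along $\pi\colon Z\to X_m$ to obtain $\sum_i u_i\bigl(f_i(x)y+g_i(x)z+h_i(x)w\bigr)=0$ in $\KK[Z]=\KK[Y][u_1,\ldots,u_m]$; since the $u_i$ are algebraically independent over $\KK[Y]$, each $f_i(x)y+g_i(x)z+h_i(x)w$ vanishes in $\KK[Y]=\KK[x,y,z,w]/(xy^4+z^4+w^4)$. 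Grading $\KK[x,y,z,w]$ by total degree in $y,z,w$ only, every nonzero element of the ideal $(xy^4+z^4+w^4)$ has degree at least $4$, whereas $f_i(x)y+g_i(x)z+h_i(x)w$ has degree at most $1$; hence $f_i=g_i=h_i=0$.

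Finally I would fix $p\in L$ with $x(p)=x_0$. Since $p$ is a fixed point of the grading torus, $\mathfrak{m}_p$ is homogeneous, and inspecting degrees one gets $\mathfrak{m}_p=(x-x_0)\KK[x]\oplus\mathfrak{n}$ and $\mathfrak{m}_p^2=(x-x_0)^2\KK[X_m]+(x-x_0)\mathfrak{n}+\mathfrak{n}^2$; computing the latter degree by degree, its degree-$0$ part is $(x-x_0)^2\KK[x]$, its degree-$1$ part is $(x-x_0)R_1$, and its degree-$d$ part is all of $R_d$ for $d\geq 2$. Hence $\mathfrak{m}_p/\mathfrak{m}_p^2\cong\KK\oplus R_1/(x-x_0)R_1$ has dimension $1+3m$, so $\dim\mathrm{T}_pX_m=3m+1$. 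As $X_m$ is irreducible of dimension $m+2$ and $3m+1>m+2$ for $m\geq 2$, the point $p$ is singular; since $x_0\in\KK$ was arbitrary, every point of $L$ is singular. The only real obstacle is the freeness claim in the second paragraph: once the degree/$\mathrm{ABC}$-type argument shows there are no low-degree relations among the generators, everything else is formal manipulation of the grading.
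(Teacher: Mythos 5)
Your proof is correct, but it takes a genuinely different route from the paper. The paper gets the lower bound $\dim\mathrm{T}_pX_m\geq 3m+1$ geometrically, by producing $3m+1$ explicit curves through $p$ (built from four curves on $Y$ through $(x,0,0,0)$ with independent tangent vectors, multiplied by suitable choices of the $u_j$), and gets the upper bound for free from the fact that $\KK[X_m]$ has $3m+1$ generators. You instead compute $\mathfrak{m}_p/\mathfrak{m}_p^2$ exactly, using the auxiliary $\NN$-grading by total degree in $y_i,z_i,w_i$ under which $p$ is a fixed point, and reduce everything to the freeness of $R_1$ over $\KK[x]$, which you verify by pulling a putative relation back to $\KK[Z]=\KK[Y][u_1,\ldots,u_m]$ and a degree count in $\KK[Y]$. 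Your argument is purely algebraic and self-contained; in particular it sidesteps the one slightly delicate point in the paper's proof, namely that a $4$-dimensional Zariski tangent space of $Y$ at a singular point is actually realized by tangent vectors of curves (true here because the tangent cone of $xy^4+z^4+w^4$ at $(x_0,0,0,0)$ spans $\KK^4$, but not automatic in general and asserted without comment in the paper). The paper's version is shorter and more geometric, and its curve construction is in the same spirit as the flexibility arguments elsewhere in Section~\ref{ms}; yours gives slightly more, namely the full graded structure of $\mathfrak{m}_p/\mathfrak{m}_p^2\cong\KK\oplus R_1/(x-x_0)R_1$. All the individual steps you leave as ``formal bookkeeping'' (homogeneity of $\mathfrak{m}_p$, the identification $\mathfrak{n}^2=\bigoplus_{d\geq 2}R_d$, the degree-by-degree computation of $\mathfrak{m}_p^2$) do check out.
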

\begin{proof}
 Fix a point $p=(x,0,\ldots,0)\in L$. Let us prove that there exist $3m+1$ curves $\gamma_1(t),\ldots, \gamma_{3m+1}(t)\subseteq X_m$ containing $p$  with linearly independent tangent vectors at $p$. Since $\dim T_{(x,0,0,0)}Y=4$, there exist curves $\eta_1(t),\ldots, \eta_4(t)$ such that their tangent vectors at $(x,0,0,0)$ are linearly independent. Let us denote $\frac{\mathrm{d} \eta_i}{\mathrm{d} t}|_{t=0}(x,0,0,0)=(a_i,b_i,c_i,d_i)$. 
 We can assume that vectors $(b_1,c_1,d_1), (b_2,c_2,d_2), (b_3,c_3,d_3)$ are linearly independent. 
 

For $1\leq j\leq m$, $1\leq i\leq 3$ let us put $u_j(t)=1$, $u_k(t)=0$ for $k\neq j$, $(x,y,z,w)=\eta(t)$. We obtain a curve on~$Z$. The image of this curve on $X_m$ we denote $\gamma_{3j+i-3}(t)$. The tangent vector to this curve at $p=\gamma_{3j+i-3}(0)$ is 
$$
(a_i,0,\ldots,0,b_i,0,\ldots,0,c_i,0,\ldots,0,d_i,0,\ldots,0), 
$$
where nonzero coordinates have numbers $1$, $j+1$, $m+j+1$ and $2m+j+1$. 

Also we define $\gamma_{3m+1}$ as the image of the curve given by $u_k(t)=1$ for all $k$, $(x,y,z,w)=\eta_4(t)$ in $Z$. Its tangent vector at $p=\gamma_{3m+1}(0)$ is 
$$
(a_4,b_4,\ldots,b_4,c_4,\ldots,c_4,d_4,\ldots,d_4).
$$ 

It is easy to see that all $3m+1$ tangent vectors to $\gamma_k$ at $p$ are linearly independent. But since $\KK[X_m]$ has $3m+1$ generators, the tangent space~$\mathrm{T}_pX_m$ can not have dimension greater than $3m+1$. 
\end{proof}

Analogically to Lemma~\ref{ooo} we prove the following lemma. 

\begin{lem}\label{ood}
The set $U_j=D_j\setminus L$ is an $\SAut(X_m)$-orbit. 
\end{lem}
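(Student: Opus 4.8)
The plan is to follow the proof of Lemma~\ref{ooo} essentially verbatim, but working inside the invariant divisor $D_j$ instead of inside $X_m$. First I would assemble the invariance data. By Lemma~\ref{mainl} each $D_j$ is $\SAut(X_m)$-invariant; moreover, since $\varepsilon_1,\ldots,\varepsilon_4$ are pairwise distinct, the system $z_i+\varepsilon_j w_i=0$ over all $i$ and all $j$ forces $z_i=w_i=0$ for every $i$, so $D_1\cap D_2\cap D_3\cap D_4=L$. Hence $L$ is $\SAut(X_m)$-invariant, and therefore so is $U_j=D_j\setminus L$; in particular $U_j$ is a union of $\SAut(X_m)$-orbits. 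Recalling from Lemma~\ref{mainl} that $\KK[D_j]=\KK[x,z_1,\ldots,z_m]$, I identify $D_j$ with $\mathbb{A}^{m+1}$, so that $L=\{z_1=\cdots=z_m=0\}$ and $U_j$ is exactly the set of points of $D_j$ at which some $z_i$ does not vanish. So it remains to prove that this $\SAut(X_m)$-invariant, irreducible set is a single orbit.

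Next I would restrict the $\GA$-subgroups of Construction~\ref{vc} to $D_j$. Since these $\GA$-subgroups preserve $D_j$ (Lemma~\ref{mainl}), the corresponding locally nilpotent derivations descend to $\KK[D_j]$, and using that $y_i$ vanishes on $D_j$ one computes
$$
\delta_z|_{D_j}=4z_1^3\frac{\partial}{\partial x},\qquad \rho_{12}|_{D_j}=z_2\frac{\partial}{\partial z_1},\qquad \rho_{l1}|_{D_j}=z_1\frac{\partial}{\partial z_l}\quad(2\le l\le m);
$$
here the hypothesis $m\ge 2$ is used so that the coordinate $z_2$ is available. Let $p\in U_j$ be a point with $z_i(p)\ne 0$ for all $i$. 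Then the tangent vectors at $p$ to the orbits of the $\GA$-subgroups attached to $\delta_z,\rho_{12},\rho_{21},\ldots,\rho_{m1}$ are
$$
4z_1(p)^3\,\partial_x,\quad z_2(p)\,\partial_{z_1},\quad z_1(p)\,\partial_{z_2},\ \ldots,\ z_1(p)\,\partial_{z_m},
$$
and these $m+1$ vectors are linearly independent. This is checked by the same bookkeeping as in Lemma~\ref{ooo}: evaluating a hypothetical linear dependence on the functions $z_1,\ldots,z_m$ kills all $\rho$-coefficients (on $D_j$ the derivation $\delta_z$ and each $\rho$ send $z_k$ to a function vanishing at $p$, except for the matched pairs $\rho_{12}(z_1)=z_2$ and $\rho_{l1}(z_l)=z_1$, which hit distinct coordinates and are nonzero at $p$), and then evaluating the remaining relation on $x$ kills the $\delta_z$-coefficient. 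Since $D_j$ is smooth of dimension $m+1$, these vectors span $\mathrm{T}_pD_j$; hence the map $\mathbb{A}^{m+1}\to D_j$ obtained by composing the corresponding one-parameter subgroups applied to $p$ has bijective differential at the origin, so it is dominant and its image is dense in $D_j$. As this image lies in the orbit $\SAut(X_m)\cdot p$ and every $\SAut(X_m)$-orbit is open in its closure~\cite{AFKKZ}, the orbit of $p$ is open in $D_j$.

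Finally I would straighten a general point of $U_j$ into one of the special form above. Let $p\in U_j$ and fix an index $j_0$ with $z_{j_0}(p)\ne 0$. For each $i$ with $z_i(p)=0$ the automorphism $\exp(s\,\rho_{ij_0})$ acts on $D_j$ by $z_i\mapsto z_i+s\,z_{j_0}$ while fixing $x$ and all $z_k$ with $k\ne i$; a choice $s\ne 0$ makes $z_i$ nonzero and leaves $z_{j_0}$ untouched. After finitely many such steps $p$ is carried by an element of $\SAut(X_m)$ to a point $r$ with $z_i(r)\ne 0$ for all $i$, whose orbit is open in $D_j$ by the previous step. Thus every point of $U_j$ lies in an open $\SAut(X_m)$-orbit of $D_j$; since two open orbits in the irreducible variety $D_j$ coincide, there is a unique such orbit, and as $U_j$ is $\SAut(X_m)$-invariant and covered by it, $U_j$ is precisely that orbit.

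The step that needs the most care is the inference ``the tangent vectors to $\GA$-orbits span $\mathrm{T}_pD_j$ $\Rightarrow$ the $\SAut(X_m)$-orbit of $p$ is open in $D_j$'': here one must use that the $\GA$-subgroups of $\Aut(X_m)$ preserve $D_j$, so that they restrict to $\GA$-subgroups of $\Aut(D_j)$, and then run the standard argument --- the one used implicitly for the ambient variety in Lemma~\ref{ooo} --- that a point at which finitely many one-parameter unipotent orbits already span the tangent space has an open orbit. The rest --- identifying the restricted derivations, the linear-independence computation, and the coordinate straightening --- is routine, and, unlike in Lemma~\ref{ooo}, here no regularity or normality input is needed, since the whole argument takes place inside the smooth variety $D_j\cong\mathbb{A}^{m+1}$.
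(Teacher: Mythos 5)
Your proof is correct, and its computational core (the derivations $\delta_z,\rho_{12},\rho_{21},\ldots,\rho_{m1}$ from Construction~\ref{vc}, the linear independence check at points with all $z_i\neq 0$, and the straightening of a general point of $U_j$ by $\exp(s\rho_{ij_0})$) coincides with the paper's. Where you genuinely diverge is in the final identification of the open orbit $O\subseteq D_j$ with $U_j$. The paper gets this from a regularity dichotomy: by Lemma~\ref{esl} the orbit $O$ consists of regular points of $X_m$, while by Lemma~\ref{sipo} every point of $L$ is singular in $X_m$, so $O$ cannot meet $L$ and hence $O=U_j$; the openness of $O$ itself is deduced from Lemma~\ref{el} applied to the closure of the orbit. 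You instead observe that $L=D_1\cap D_2\cap D_3\cap D_4$ is $\SAut(X_m)$-invariant (since the $\varepsilon_j$ are distinct), so $U_j=D_j\setminus L$ is a union of orbits, each of which must be the unique open orbit of the irreducible variety $D_j$; and you replace Lemma~\ref{el} by a direct dominance argument for the composition of flows inside the smooth variety $D_j\cong\mathbb{A}^{m+1}$. Your route is more self-contained --- it needs neither the singularity computation of Lemma~\ref{sipo} nor the normality input of Lemma~\ref{esl} --- whereas the paper's route reuses auxiliary lemmas it needs anyway for Theorem~\ref{a} and keeps the whole analysis at the level of the ambient variety $X_m$. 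Both arguments are complete.
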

\begin{proof}
It is sufficient to show that for each point $p\in U_j$ the tangent space $\mathrm{T}_pU_j=\mathrm{T}_pD_j$ is generated by the tangent vectors to orbits of $\GA$-actions. Indeed, if the orbit $\SAut(X_m)p$ has dimension $\leq m$, then the closure $\overline{\SAut(X_m)p}$ is a closed $\SAut(X_m)$-invariant subvariety of dimension $\leq m$. By Lemma~\ref{el}, the tangent vectors can not generate $(m+1)$-dimensional space. Hence, orbits of all points $p\in U_j$ are open in $D_j$. Since $D_j$ is irreducible, $U_j$ is contained in one $\SAut(X_m)$-orbit~$O$. By Lemma~\ref{esl}, this orbit consists of regular points of $X_m$. Lemma~\ref{sipo} provides $O=U_j$. 

So, we need to show that for each $p\in U_j$ there are $m+1$ LNDs such that corresponding vector fields are linearly independent at $p$. Let us take the following LNDs from Construction~\ref{vc}: $\delta_z, \rho_{12},\rho_{21},\ldots, \rho_{m1}$. We use that same notations for the corresponding vector fields. Suppose $p\in D$ and $z_i(p)\neq 0$ for all~$i$.  Assume 
 $$\mu\delta_z(p)+\lambda_1 \rho_{12}(p)+\lambda_2 \rho_{21}(p)+\ldots+\lambda_m\rho_{m1}(p)=0.$$
We have
$
\delta_z(x)(p)=4z_1^3(p)\neq 0
$
and $\rho_{ij}(x)(p)=0$. Therefore, $\mu=0$. Also we have $\rho_{ij}(z_i)(p)=z_j(p)\neq0$ and $\rho_{kj}(z_i)=0$ for $k\neq i$. This implies $\lambda_i=0$. 
So, these $m+1$ vector fields are linearly independent at~$p$. 

Now let us take any $p\in U_j$. Since $p\notin L$, there exists $k$ such that $z_k(p)\neq 0$. Note that $\rho_{ik}(z_i)=z_k$. Hence, $\rho_{ik}(z_i)(p)\neq 0$. Therefore there exists $s\in\KK$ such that $\exp (s\delta_3)(p)=q$, where $z_i(q)\neq 0$. Moreover $z_l(q)=z_l(p)$ for each $l\neq i$. So, there exists an automorphism taking $p$ to $r$, where $z_j(r)\neq 0$ for all $j$. Since tangent vectors to $\GA$-orbits span a $(m+1)$-dimensional space at $r$, so they do at $p$ either. 
\end{proof}

\begin{lem}\label{rere}
The set $D=D_1\cup D_2\cup D_3\cup D_4$ from Lemma~\ref{mainl} is an $\Aut(X_m)$-orbit. 
\end{lem}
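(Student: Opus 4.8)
The plan is to prove two statements about $D$: that it is $\Aut(X_m)$-invariant, and that $\Aut(X_m)$ acts transitively on it.

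For invariance, note that by Lemma~\ref{ooo} the set $\mathcal{O}=X_m\setminus D$ is a single $\SAut(X_m)$-orbit, and it is dense since $X_m$ is irreducible and $D$ is closed, so $\mathcal{O}$ is the unique open $\SAut(X_m)$-orbit. Since $\SAut(X_m)$ is normal in $\Aut(X_m)$, for $g\in\Aut(X_m)$ and an $\SAut(X_m)$-orbit $O=\SAut(X_m)\cdot p$ we have $g(O)=(g\,\SAut(X_m)\,g^{-1})\cdot(g\cdot p)=\SAut(X_m)\cdot(g\cdot p)$, so $g$ permutes $\SAut(X_m)$-orbits and preserves their dimensions; in particular $g(\mathcal{O})=\mathcal{O}$, hence $g(D)=D$.

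For transitivity I would first reduce to a statement about the four divisors. By the argument just given, $\Aut(X_m)$ permutes the $\SAut(X_m)$-orbits of each fixed dimension; by Lemmas~\ref{ooo}, \ref{sipo} and~\ref{ood} the $\SAut(X_m)$-orbits are $\mathcal{O}$, the orbits $U_j=D_j\setminus L$, and subsets of the line $L$, so the $(m+1)$-dimensional ones are precisely $U_1,U_2,U_3,U_4$. Hence $\Aut(X_m)$ permutes $\{U_1,U_2,U_3,U_4\}$, and therefore also their closures $\{D_1,D_2,D_3,D_4\}$. Since $\SAut(X_m)$ is already transitive on each $U_j$ by Lemma~\ref{ood}, and the common intersection $L=D_1\cap D_2\cap D_3\cap D_4$ is handled by further automorphisms of $X_m$, it suffices to exhibit automorphisms realizing a transitive permutation of the four divisors.

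To construct these automorphisms I would use that any automorphism of $Y$ commuting with the $\KK^\times$-action defining $X_m$ descends to an automorphism of $X_m$. Fixing a primitive fourth root of unity $\iota$, the map $(x,y,z,w)\mapsto(x,y,\iota z,w)$ preserves $Y=\{xy^4+z^4+w^4=0\}$ because $\iota^4=1$, and it commutes with the torus action; let $\varphi\in\Aut(X_m)$ be the induced automorphism, so that $\varphi^*(z_k)=\iota z_k$ while $\varphi^*$ fixes $x$, $y_k$ and $w_k$. Writing $\varepsilon_1,\ldots,\varepsilon_4$ for the fourth roots of $-1$ and using $D_j=\{y_k=z_k+\varepsilon_j w_k=0\text{ for all }k\}$, a direct check gives $\varphi(D_j)=D_{j'}$ with $\varepsilon_{j'}=\iota\varepsilon_j$; since multiplication by $\iota$ is a $4$-cycle on $\{\varepsilon_1,\ldots,\varepsilon_4\}$, the automorphism $\varphi$ cyclically permutes $D_1,D_2,D_3,D_4$, which together with Lemma~\ref{ood} gives the transitivity. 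The hard part is precisely this construction — producing genuine automorphisms of $X_m$ outside $\SAut(X_m)$ that fuse the four $\SAut(X_m)$-invariant prime divisors (the coordinate swap $(z,w)\mapsto(w,z)$ yields another such map); once such maps are in hand, the result follows from the lemmas already established.
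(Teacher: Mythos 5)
Your key construction is exactly the paper's proof: the paper disposes of this lemma in one line by multiplying all $w_i$ by a fourth root of unity, which is the same automorphism (up to acting on $w$ rather than $z$) as your $\varphi$, and it permutes $D_1,\dots,D_4$ transitively because multiplication by a primitive fourth root of unity is a $4$-cycle on the fourth roots of $-1$. Your additional observations --- invariance of $D$ via normality of $\SAut(X_m)$ in $\Aut(X_m)$, the induced permutation of the $U_j$ and hence of their closures, and transitivity of $\SAut(X_m)$ on each $U_j$ from Lemma~\ref{ood} --- are correct and supply details the paper leaves implicit.

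The one genuine defect is your sentence asserting that ``the common intersection $L=D_1\cap D_2\cap D_3\cap D_4$ is handled by further automorphisms of $X_m$.'' It cannot be: by Lemma~\ref{sipo} every point of $L$ is singular in $X_m$, while the points of each $U_j$ are regular (Lemma~\ref{ood} together with Lemma~\ref{esl}), and no automorphism takes a singular point to a regular one. So the set $D$, read literally as a subset of $X_m$, is not a single $\Aut(X_m)$-orbit; Theorem~\ref{a}\,(iii)--(iv) of the paper itself exhibits $U_1\cup U_2\cup U_3\cup U_4$, the punctured line, and the origin as three distinct $\Aut(X_m)$-orbits inside $D$. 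The lemma is stated loosely and is only ever used in the weaker sense that $\Aut(X_m)$ permutes the four prime divisors $D_j$ transitively --- which is precisely what your automorphism $\varphi$ establishes. Your argument for that intended statement is complete; the claim about $L$ should simply be deleted rather than ``handled.''
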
  
\begin{proof}
Multiplying all $w_i$ by $\varepsilon$, where $\varepsilon^4=1$, we can permute transitively four prime divisors $D_j$. 
\end{proof}

Now we can describe orbits of $\Aut(X_m)$ and $\SAut(X_m)$. 

\begin{theor}\label{a} 

i) There are five $\SAut(X_m)$-orbits on $X_m^{\mathrm{reg}}$: the open one 
$$\mathcal{O}=X_m\setminus (D_1\cup D_2\cup D_3 \cup D_4)$$ 
and four orbits 
$$U_j=D_j\setminus\{z_u=z_v=w_u=w_v=0\},\qquad 1\leq j\leq 4.$$

ii) Singular points $X_m^{\mathrm{sing}}$ form a line of $\SAut(X_m)$-fixed points. This line coincides with $L$.

iii) There are two $\Aut(X_m)$-orbits on $X^{\mathrm{reg}}$: the open one $\mathcal{O}$ and a reducible subvariety
$U_1\cup U_2\cup U_3\cup U_4$ of codimension one.

iv) There are two $\Aut(X_m)$-orbits on $X_m^{\mathrm{sing}}$: a punctured line $\{(x,0,\ldots,0), x\neq 0\}$ and a point $(0,\ldots,0)$.  
\end{theor}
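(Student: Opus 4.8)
\emph{Plan.} The plan is to assemble the statement from the lemmas above once the set‑theoretic picture of $X_m$ is nailed down; the only genuinely new inputs are that $\SAut(X_m)$ fixes $L$ pointwise and that the origin is $\Aut(X_m)$‑invariant. Write $D=D_1\cup D_2\cup D_3\cup D_4$. A one‑line computation gives $D_i\cap D_j=L$ for $i\ne j$ (on $D_i\cap D_j$ one has $z_k+\varepsilon_iw_k=z_k+\varepsilon_jw_k=0$, hence $z_k=w_k=0$ for all $k$), so with Lemma~\ref{mainl} ($D=\{y_1=\dots=y_m=0\}$, $D_j\cong\AA^{m+1}$) and $L\subset D_j$ one gets the disjoint decomposition $X_m=\mathcal O\sqcup U_1\sqcup U_2\sqcup U_3\sqcup U_4\sqcup L$ with $\mathcal O=X_m\setminus D$, $U_j=D_j\setminus L$. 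By Lemma~\ref{ooo} the points of $\mathcal O$ are flexible, hence smooth; by the proof of Lemma~\ref{ood} each point of $U_j$ carries $m+1=\dim X_m-1$ linearly independent tangent vectors to $\GA$‑orbits, hence is smooth by Lemma~\ref{esl}; and by Lemma~\ref{sipo} every point of $L$ has $\dim\mathrm T_pX_m=3m+1>\dim X_m$, hence is singular. So $X_m^{\mathrm{reg}}=\mathcal O\cup U_1\cup\dots\cup U_4$ and $X_m^{\mathrm{sing}}=L$, which is the first half of ii).

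For the second half of ii) I would show that every LND $\delta$ of $\KK[X_m]$ satisfies $\delta(x)\in I_L:=(y_1,\dots,y_m,z_1,\dots,z_m,w_1,\dots,w_m)$, equivalently that $\SAut(X_m)$ acts trivially on $L$. First, $\delta$ preserves $I_L$: by Lemma~\ref{mainl} it preserves $(y_1,\dots,y_m)$ and every ideal $I_{D_j}=(y_\bullet,\ z_\bullet+\varepsilon_jw_\bullet)\subset I_L$, and subtracting $\delta(z_i+\varepsilon_jw_i)\in I_L$ for two distinct $j$ gives $\delta(z_i),\delta(w_i)\in I_L$. Hence $\delta$ induces an LND of $\KK[X_m]/I_L=\KK[x]$, necessarily $c\,\partial_x$ with $c\in\KK$, and I want $c=0$. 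Decompose $\delta=\sum_i\delta_i$ for the grading $\deg x=0,\ \deg y_i=\deg z_i=\deg w_i=1$: by Lemma~\ref{fl} the lowest part $\delta_l$ is an LND, and if $l<0$ it would kill $x$ and all $y_i,z_i,w_i$ (since $\delta_l(y_i)\in I_L$ has degree $l+1$, etc.), so $l\ge 0$; since $\delta(x)\notin I_L$ then forces $l=0$ with $\delta_0$ an LND and $\delta_0(x)=c$, I would further decompose $\delta_0=\sum_j(\delta_0)^{(j)}$ with respect to $\mathfrak F$ (all $\mathfrak F$‑degrees $\ge 0$, by Lemma~\ref{fl} and the non‑existence of LNDs of negative $\mathfrak F$‑degree from the proof of Lemma~\ref{mainl}). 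A weight count shows $(\delta_0)^{(j)}(x)=0$ unless $j\in\{0,4\}$, with $(\delta_0)^{(0)}(x)\in\KK x$ and $(\delta_0)^{(4)}(x)\in\KK$, so $\delta_0(x)=c$ forces $(\delta_0)^{(4)}(x)=c$; but the same count gives $(\delta_0)^{(4)}(y_i)=(\delta_0)^{(4)}(z_i)=(\delta_0)^{(4)}(w_i)=0$, so applying $(\delta_0)^{(4)}$ to $xy_1^4+z_1^4+w_1^4=0$ yields $c\,y_1^4=0$ in the domain $\KK[X_m]$ — a contradiction unless $c=0$. This completes ii).

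Parts i) and iii) would then be formal. By Lemmas~\ref{ooo} and~\ref{ood} the strata $\mathcal O$ and $U_1,\dots,U_4$ of the decomposition above are each a single $\SAut(X_m)$‑orbit, so these are exactly the five $\SAut(X_m)$‑orbits on $X_m^{\mathrm{reg}}=X_m\setminus L$, which is i). For iii), $D$ is a single $\Aut(X_m)$‑orbit by Lemma~\ref{rere}, and the automorphisms $w_i\mapsto\varepsilon w_i$ ($\varepsilon^4=1$) used there permute $D_1,\dots,D_4$ and hence $U_1,\dots,U_4$ transitively; since $D$ and $L=X_m^{\mathrm{sing}}$ are $\Aut(X_m)$‑invariant, $U_1\cup\dots\cup U_4=D\setminus L$ is one $\Aut(X_m)$‑orbit of codimension one, and $\mathcal O=X_m\setminus D$ — being $\Aut(X_m)$‑invariant and a single $\SAut(X_m)$‑orbit — is one $\Aut(X_m)$‑orbit; these two exhaust $X_m^{\mathrm{reg}}$.

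For iv), the $\KK^\times$‑subgroup $\mu\subset\Aut(X_m)^0$ attached to $\mathfrak F$ acts by $t\cdot(x,y_i,z_i,w_i)=(t^{-4}x,ty_i,z_i,w_i)$, hence on $L$ by $x\mapsto t^{-4}x$, which is transitive on $L\setminus\{(0,\dots,0)\}$; so that set lies in one $\Aut(X_m)$‑orbit. It remains to show $(0,\dots,0)$ is $\Aut(X_m)$‑fixed, and I expect this to be the main obstacle: since $\SAut(X_m)$ acts trivially on $L$ and $\mu$ acts on $L\cong\AA^1$ by all scalings, it amounts to ruling out an automorphism acting on $L$ by a nontrivial translation. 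I would do this by distinguishing the origin among the points of $L$ via the local structure of $X_m$: at $(0,\dots,0)$ the initial form of $xy_1^4+z_1^4+w_1^4$ is $z_1^4+w_1^4=\prod_{k=1}^4(z_1+\varepsilon_kw_1)$, so the tangent cone is reducible (it records the four divisors $D_j$), whereas at $(a,0,\dots,0)$ with $a\neq0$ the relevant initial form $ay_1^4+z_1^4+w_1^4$ defines a smooth plane quartic and the tangent cone is irreducible — so the local rings are non‑isomorphic. (Alternatively one may invoke the explicit description of $\Aut(X_m)$ coming from $X_m$ being a one‑dimensional torus quotient of a trinomial hypersurface, which shows $\Aut(X_m)$ acts on $L$ only by scalings.) Then $X_m^{\mathrm{sing}}=L$ splits into exactly the two $\Aut(X_m)$‑orbits $\{(0,\dots,0)\}$ and $\{(x,0,\dots,0)\mid x\neq0\}$, finishing iv).
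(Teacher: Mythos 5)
Your parts i), ii) and iii) are correct and essentially follow the paper's route. For ii) you replace the paper's single $\ZZ^2$-grading argument by two successive $\ZZ$-gradings (first $\deg x=0$, $\deg y_i=\deg z_i=\deg w_i=1$, then $\mathfrak F$); this is the same mechanism as Lemma~\ref{ffll}, and your final step --- applying the $\mathfrak F$-degree-$4$ component to the relation $xy_1^4+z_1^4+w_1^4=0$ to kill the constant $c$ --- is a clean variant of the paper's conclusion that every homogeneous LND has bidegree $(c,d)$ with $c\geq 3$, whence $\delta(x)\in I$. Your reduction of iii) to flexibility/invariance of $D$ and transitivity of the $w_i\mapsto\varepsilon w_i$ automorphisms also matches the paper.

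The genuine gap is in the last step of iv), the one you yourself flag as the main obstacle. Your tangent-cone argument treats $X_m$ as if it were the hypersurface $\{xy_1^4+z_1^4+w_1^4=0\}$; for $m\geq 2$ it is not (it sits in $\AA^{3m+1}$ with $\dim X_m=m+2$, cut out also by the $2\times 2$ minors of the matrix $(y_i,z_i,w_i)$), so the tangent cone at a point of $L$ is not determined by the initial form of that single equation, and the claimed irreducibility at $(a,0,\dots,0)$, $a\neq 0$, is asserted rather than proved. The idea is repairable: computing tangent directions of arcs through the quotient map $\pi\colon Z\to X_m$ shows the reduced tangent cone at $(a,0,\dots,0)$ is $\{(\alpha,\beta\otimes u)\mid \alpha\in\KK,\ u\in\AA^m,\ \beta=(b,c,d),\ ab^4+c^4+d^4=0\}$, which is irreducible for $a\neq 0$ (cone over a smooth plane quartic) and has four top-dimensional components for $a=0$; but this computation has to be carried out, and your sketch does not do it. The paper avoids local invariants entirely: using the same $\ZZ^2$-grading it shows that every \emph{semisimple} derivation satisfies $\delta^n(x)\in J=(x,y_\bullet,z_\bullet,w_\bullet)$ for all $n$, so every $\GG_m$-action fixes the origin; since conjugating the standard $\GG_m$-action by an arbitrary automorphism $\varphi$ yields a $\GG_m$-action whose unique fixed point on $L$ is $\varphi(0,\dots,0)$, the origin is $\Aut(X_m)$-fixed. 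You should either carry out the tangent-cone computation on the actual variety $X_m$ or switch to the semisimple-derivation argument.
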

\begin{proof}
The assertion i) follows from Lemma~\ref{mainl}, Lemma~\ref{ooo}, Lemma~\ref{sipo}, and Lemma~\ref{ood}.

To prove~ii), we need to prove that each point $p=(x,0,\ldots,0)$ on~$X_m$ is $\SAut(X_m)$-fixed. Let us denote by $I$ the ideal 
$$(y_1,\ldots, y_m, z_1,\ldots, z_m, w_1,\ldots,w_m)\subseteq\KK[X_m].$$  
It is sufficient to prove that for any LND~$\delta$ we have $\delta(x)\in I$. Then  $\delta(x)(p)=0$. 
Let us consider $\mathbb{Z}^2$-grading on $\KK[X_m]$ defined by 
$$\deg x=(0,-4), \deg y_i=(1,1), \deg z_i=\deg w_i=(1,0)\text{ for all }i.$$
Suppose $\zeta$ is a $\mathbb{Z}^2$-homogeneous LND on $\KK[X_m]$ of degree $(\alpha,\beta)$. 
In the  proof of Lemma~\ref{mainl} we have proved that there is no nonzero LND~$\zeta$ with $\zeta(x)=0$. Therefore, $\beta\geq 4$, otherwise $x\mid \zeta(x)$. We have 
$\deg(\zeta(z_i))=\deg(\zeta(w_i))=(l,k),$
where $k\geq 4$. Therefore, each monomial in $\zeta(z_i)$ and $\zeta(w_i)$ is divisible by a product $\prod y_i^{s_i}$, where $\sum s_i\geq 4$. If $\zeta(z_1)=\zeta(w_1)=0$, then $\zeta(x)=0$, and hence, $\zeta=0$. This implies $\alpha\geq 3$.  

Now let $\delta$ be an LND on $X_m$. We can decompose this LND onto a sum of $\mathbb{Z}^2$-homogeneous derivations: $\delta=\sum\limits_{\gamma\in\mathbb{Z}^2}\delta_\gamma$. Derivations corresponding to vertices of the convex hull of degrees of summands are LNDs. Therefore, for each $\gamma=(c,d)$, we have $c\geq 3$.   

So, we have
$
\delta(x)=\sum\limits_{\gamma\in\mathbb{Z}^2}\delta_\gamma(x), 
$
where $\deg\delta_\gamma(x)=(c,d-4)$.
Since $c>0$, we have $\delta_\gamma(x)\in I$. So, $\delta(x)\in I$. 

By Lemma~\ref{rere}, $\Aut(X_m)$ permutes $D_j$ transitively. So, applying~$i)$ we obtain that all regular points are covered by no more than two $\Aut(X_m)$-orbits: $\mathcal{O}$ and $U_1\cup U_2\cup U_3\cup U_4$.  To prove iii), we need only to show that these two sets are not contained in the same $\Aut(X_m)$-orbit.  Each point from $\mathcal{O}$ is flexible. Since $D_j$ is $\SAut(X_m)$-invariant, each point on $D_j$ is not flexible. Since $\Aut(X_m)$ can not take a flexible point to a nonflexible one, we obtain the goal.

Now let us prove iv). We can consider the action of a one-dimensional torus $\KK^\times$ corresponding to the $\mathbb{Z}$-grading $\mathfrak{F}$, or equivalently the induced $\mathbb{Z}$-grading for $\mathbb{Z}=\{(0,n)\}\subseteq\mathbb{Z}^2$. 
That is $t\cdot x=t^{-4}x$, $t\cdot y_i=ty_i$, $t\cdot z_i=z_i$, and $t\cdot w_i=w_i$. This action restricted to $X_m^{\mathrm{sing}}=L$ has two orbits: a punctured line $\{(x,0,\ldots,0), x\neq 0\}$ and a point $(0,\ldots,0)$. So we need to prove that $(0,\ldots,0)$ is an $\Aut(X_m)$-fixed point. To do this it is sufficient to prove that $(0,\ldots,0)$ is $\KK^\times$-fixed for any $\KK^\times$-action on $X_m$. Each nontrivial $\KK^\times$-action corresponds to a nonzero semisimple derivation~$\delta$ on $\KK[X]$. Let us consider the decomposition of $\delta$ onto a sum of $\mathbb{Z}^2$-homogeneous derivations $\delta=\sum\limits_{\gamma\in\mathbb{Z}^2}\delta_\gamma$.  By Lemma~\ref{ffll}(ii), derivations corresponding to nonzero vertices of the convex hull $P$ of degrees of summands are LNDs. As it is proved above, all nonzero vertices of $P$ have the form $(a,b)$, where $a\geq 3$. Hence, for every $\gamma=(c,d)\neq (0,0)$ we have $c>0$. Therefore, $\delta_\gamma(x)\in I$. Let us denote by $J$ the ideal in $\KK[X]$ generated by $x,y_1,\ldots,y_m,z_1,\ldots,z_m, w_1,\ldots,w_m$. By definition, $I\subseteq J$. So, $\delta_\gamma(x)\in J$ for all nonzero $\gamma$.
We have $\deg \delta_{(0,0)}(x)=\deg x=(0,-4)$. So, $x\mid \delta_{(0,0)}(x)$. Therefore, $\delta_{(0,0)}(x)\in J$. Thus, $\delta(x)\in J$. 

Analogically we can prove that $\delta^m(x)\in J$ for all positive integers $m$. Indeed, 
$$\delta^m=\left(\sum\limits_{\gamma\in\mathbb{Z}^2}\delta_\gamma\right)^m=\delta_{(0,0)}^m+(\delta_{\gamma_1}\circ\ldots\circ \delta_{\gamma_m}+\ldots). $$
Here $\deg \delta_{(0,0)}^m(x)=\deg x=(-4,0)$, hence $\delta_{(0,0)}^m(x)\in J$. And for any $(\gamma_1,\ldots, \gamma_m)\neq (0,\ldots,0)$, we have 
$$\deg \delta_{\gamma_1}\circ\ldots\circ \delta_{\gamma_m}(x)=(r,s), \text{ where }r>0.$$
Therefore, $\delta_{\gamma_1}\circ\ldots\circ \delta_{\gamma_m}(x)\in J$. This implies $\delta^m(x)\in J$. Hence, for the corresponding $\KK^\times$-action, we have $t\cdot x\in J$. 
Therefore, the point $(0,\ldots,0)$ is $\KK^\times$-fixed. 
\end{proof}

\begin{cor}
The variety $X_m$ is generically flexible, but not flexible. 
\end{cor}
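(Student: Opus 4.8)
The plan is to read off both assertions from the structural results already in place, adding only trivial dimension bookkeeping.

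For generic flexibility I would simply quote Lemma~\ref{ooo}: the set $\mathcal{O}=X_m\setminus D$ is a single $\SAut(X_m)$-orbit. Since $D$ is a union of four divisors, it is a proper closed subset of the irreducible variety $X_m$, so $\mathcal{O}$ is a nonempty, dense, open subset. As recalled in the introduction, every $\SAut(X)$-orbit is open in its closure, so a dense orbit is open, and each of its points is flexible; equivalently $\mathrm{FML}(X_m)=\KK$. This is precisely the definition of generic flexibility.

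For non-flexibility I would produce a regular point lying outside the open orbit. By Lemma~\ref{mainl} each $D_j$ is a $\SAut(X_m)$-invariant prime divisor, hence a proper closed $\SAut(X_m)$-invariant subvariety of $X_m$. By Lemma~\ref{sipo} the singular locus $X_m^{\mathrm{sing}}$ is the line $L$, and since $\dim D_j=m+1\geq 3>1=\dim L$ there is a regular point $p\in D_j\setminus L$. The $\SAut(X_m)$-orbit of $p$ is contained in $D_j$, so it is not dense in $X_m$, hence $p$ is not flexible; therefore $X_m$ is not flexible. (Alternatively, Theorem~\ref{a}(i) already exhibits five distinct $\SAut(X_m)$-orbits on $X_m^{\mathrm{reg}}$, so $\SAut(X_m)$ does not act transitively on $X_m^{\mathrm{reg}}$, and the equivalence (i)$\Leftrightarrow$(iii) of \cite[Theorem~0.1]{AFKKZ} gives the claim.)

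I do not expect any real obstacle: beyond citing Lemmas~\ref{mainl}, \ref{ooo} and \ref{sipo} and Theorem~\ref{a}, the only facts to check are that $D\subsetneq X_m$ and $D\not\subseteq X_m^{\mathrm{sing}}$, both immediate from $m\geq 2$.
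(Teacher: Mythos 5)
Your argument is correct and takes essentially the same route as the paper, which deduces this corollary directly from Theorem~\ref{a}: the existence of the open orbit $\mathcal{O}$ gives generic flexibility, while the presence of further $\SAut(X_m)$-orbits on $X_m^{\mathrm{reg}}$ contradicts the transitivity characterization of flexibility. One small citation slip: Lemma~\ref{sipo} only gives the inclusion $L\subseteq X_m^{\mathrm{sing}}$ (the equality is Theorem~\ref{a}(ii)), but the regular point you need in $D_j\setminus L$ exists in any case because $X_m$ is normal, so its singular locus has codimension at least $2$ and cannot contain the divisor $D_j$.
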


Using Theorem~\ref{a} it is easy to describe orbits of the neutral component $\Aut(X_m)^0$. 
\begin{cor}
The neutral component $\Aut(X_m)^0$ has seven orbits on $X_m$. Five orbits consist of regular points: the open one $\mathcal{O}$ and four orbits 
$U_j,\ 1\leq j\leq 4$, and two orbits consist of singular points: a punctured line $\{(x,0,0,0,0,0,0), x\neq 0\}$ and a point $(0,0,0,0,0,0,0)$. 
\end{cor}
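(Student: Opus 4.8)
The plan is to derive this from Theorem~\ref{a} by sandwiching $\Aut(X_m)^0$ between $\SAut(X_m)$ and $\Aut(X_m)$. First I would record that $\SAut(X_m)\subseteq\Aut(X_m)^0$, since $\SAut(X_m)$ is generated by $\GA$-subgroups and every $\GA$-subgroup lies in the neutral component; and that the one-dimensional torus $H$ acting by $t\cdot x=t^{-4}x$, $t\cdot y_i=ty_i$, $t\cdot z_i=z_i$, $t\cdot w_i=w_i$ (the torus of the grading $\mathfrak F$, already used in the proof of Theorem~\ref{a}(iv)) is a $\GG_m$-subgroup of $\Aut(X_m)$, hence also lies in $\Aut(X_m)^0$. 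Consequently every $\Aut(X_m)^0$-orbit is a union of $\SAut(X_m)$-orbits, it contains the corresponding $H$-orbit, and it is contained in an $\Aut(X_m)$-orbit.

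Three of the seven orbits are pinned down by this sandwich alone. The open orbit $\mathcal{O}$ is a single $\SAut(X_m)$-orbit (Lemma~\ref{ooo}) and also a single $\Aut(X_m)$-orbit (Theorem~\ref{a}(iii)), hence a single $\Aut(X_m)^0$-orbit. The origin is $\Aut(X_m)$-fixed (Theorem~\ref{a}(iv)), hence $\Aut(X_m)^0$-fixed. On $L=X_m^{\mathrm{sing}}$ the group $\SAut(X_m)$ acts trivially (Theorem~\ref{a}(ii)), but $H$ acts on $L=\{(x,0,\dots,0)\}$ by $x\mapsto t^{-4}x$, so $L\setminus\{0\}$ is an $H$-orbit; since it is already a full $\Aut(X_m)$-orbit (Theorem~\ref{a}(iv)), it is a single $\Aut(X_m)^0$-orbit.

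The remaining four orbits are the divisorial ones $U_j$, $1\le j\le 4$. Here $U_j=D_j\setminus L$ is a single $\SAut(X_m)$-orbit (Lemma~\ref{ood}) while $U_1\cup\dots\cup U_4$ is a single $\Aut(X_m)$-orbit (Lemma~\ref{rere} and Theorem~\ref{a}(iii)), so the point to establish is that $\Aut(X_m)^0$ does not merge them. For $p\in U_j$ and $g\in\Aut(X_m)^0$ I would pick an algebraic family $\{\varphi_b\}_{b\in B}$ over an irreducible base with $\varphi_{b_0}=\mathrm{id}$ and $\varphi_{b_1}=g$; then $b\mapsto\varphi_b(p)$ is a morphism from an irreducible variety, so its image is irreducible, and since every $\varphi_b$ preserves the $\Aut(X_m)$-invariant set $D=D_1\cup\dots\cup D_4$ (Lemma~\ref{rere}) this image lies in $D$. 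Being irreducible and contained in the union of the closed sets $D_1,\dots,D_4$, it lies inside a single $D_l$. Since $D_j\cap D_k=L$ for $j\ne k$ (clear from the description of the $D_j$ in Lemma~\ref{mainl}) and $p\in U_j=D_j\setminus L$, we must have $l=j$; therefore $g\cdot p\in D_j$, and as $g$ is an automorphism and $p$ is regular, $g\cdot p$ is regular, so $g\cdot p\in D_j\cap X_m^{\mathrm{reg}}=D_j\setminus L=U_j$. Thus $\Aut(X_m)^0\cdot p=U_j$ for every $j$.

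Assembling the pieces yields precisely the seven orbits $\mathcal{O},U_1,U_2,U_3,U_4$ on $X_m^{\mathrm{reg}}$ and $L\setminus\{0\},\{0\}$ on $X_m^{\mathrm{sing}}$. The one step with genuine content is the non-merging of $U_1,\dots,U_4$, equivalently the assertion that the automorphism permuting the divisors $D_j$ (multiplication of the $w_i$ by a fourth root of unity) is not in $\Aut(X_m)^0$; the irreducibility argument above is exactly what rules this out, since a connected algebraic family cannot carry a point of $U_j$ into $U_k$ without its trace being an irreducible subset of $D_1\cup\dots\cup D_4$ that meets two of the $D_l$ away from their common intersection $L$. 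Everything else is bookkeeping with the orbit data already in Theorem~\ref{a}.
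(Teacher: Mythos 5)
Your proof is correct and follows essentially the same route as the paper: sandwich $\Aut(X_m)^0$ between $\SAut(X_m)$ and $\Aut(X_m)$, use Theorem~\ref{a} to identify the candidate orbits, and use connectedness of $\Aut(X_m)^0$ to show the four divisorial orbits $U_j$ are not merged. The only difference is that you unpack the connectedness step explicitly via the algebraic-family definition of $\Aut(X_m)^0$ (tracing $b\mapsto\varphi_b(p)$ and using irreducibility of its image in $D_1\cup\dots\cup D_4$), where the paper simply asserts that a connected group cannot nontrivially permute the irreducible components $U_j$ of $X_m^{\reg}\setminus\mathcal{O}$; this is a more detailed rendering of the same argument, not a different one.
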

\begin{proof}
Since $\SAut(X_m)\subseteq \Aut(X_m)^0$, by Theorem~\ref{a} i) $\mathcal{O}$ is contained in one $\Aut(X_m)^0$-orbit. But by Theorem~\ref{a} iii)  $\mathcal{O}$ is an $\Aut(X_m)$-orbit. So,  $\mathcal{O}$ is a $\Aut(X_m)^0$-orbit. Similarly each $U_j$ is contained in one $\Aut(X_m)^0$-orbit. Since $U_j$ are irreducible components of $X_m^{\reg}\setminus\mathcal{O}$, each element of $\Aut(X_m)^0$ permutes $U_j$. Connectedness of  $\Aut(X_m)^0$ provides that each $U_j$ is  $\Aut(X_m)^0$-invariant. So, each $U_j$ is an  $\Aut(X_m)^0$-orbit.

By Theorem~\ref{a}~iv), the point $(0,0,0,0,0,0,0)$ is $\Aut(X_m)$-fixed. So, it is $\Aut(X_m)^0$-fixed. All other singular points of $X_m$ lie in one $\KK^\times$-orbit, and hence in one $\Aut(X_m)^0$-orbit. 
\end{proof}

\begin{re}
One can build one more family of generically flexible varieties admitting $\SAut(X)$-invariant divisors. For this purpose one can start not from $Y=\{xy^4=z^4+w^4\}$, but from $Y'=\{xy^n=z_1^n+\ldots+z_k^n\}$, where $k+1<n$. Then, as above, $Z'=Y'\times \mathbb{A}^m$. The desired variety~$X'_m$ is obtained as the categorical quotient $Z'/\!/\mathbb{K}^\times$. 
\end{re}

\section{Concluding remarks and questions}\label{lll}

Let us say that $X$ is {\it flexible in codimension one} if $X$ admits an open $\SAut(X)$-orbit $\mathcal{O}$ and the codimension of $X\setminus \mathcal{O}$ in $X$ is at least~$2$. Results of Section~\ref{ms} show that generic flexibility does not imply flexibility in codimension one. But for some types of varieties this implication is still true.

Let us start with the following assertion.

\begin{prop}\label{tr}
If an irreducible affine variety $X$ is generically flexible and $D\subseteq X$ is an $\SAut(X)$-invariant prime divisor, then $D$ is not principle. 
\end{prop}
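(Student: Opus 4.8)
The plan is to argue by contradiction. Suppose $D = \{f = 0\}$ for some $f \in \KK[X]$. The key point is that an $\SAut(X)$-invariant prime divisor forces $f$ to be a semi-invariant — indeed, since the ideal $(f)$ is $\SAut(X)$-invariant (because $D$ is), for every $\GA$-subgroup $\{\exp(s\delta)\}$ we have $\exp(s\delta)(f) \in (f)$ for all $s$, and comparing degrees in $s$ this yields $\delta(f) \in (f)$; since $\delta$ is locally nilpotent and $f$ divides $\delta(f)$, Principle~5 of~\cite{Fr} gives $\delta(f) = 0$. Thus $f \in \mathrm{ML}(X) = \KK[X]^{\SAut(X)}$.

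Now I would use generic flexibility. Since $X$ is generically flexible, $\mathrm{FML}(X) = \KK$, i.e. there are no nonconstant rational $\SAut(X)$-invariants; in particular the only $\SAut(X)$-invariant regular functions are the constants, so $f \in \KK$. But then $D = \{f = 0\}$ is either empty or all of $X$, contradicting that $D$ is a prime divisor. Hence no such $f$ exists and $D$ is not principal.

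The one step that requires care is the passage from "$(f)$ is $\SAut(X)$-invariant" to "$\delta(f) = 0$ for every locally nilpotent derivation $\delta$": one must expand $\exp(s\delta)(f) = \sum_{n\ge 0} \frac{s^n}{n!}\delta^n(f)$, note that membership in the principal ideal $(f)$ for all $s\in\KK$ (an infinite field) forces each coefficient $\delta^n(f) \in (f)$, and then invoke that a locally nilpotent derivation cannot satisfy $\delta(f) = gf$ with $g \ne 0$ on an integral domain — otherwise iterating and using the degree function associated to $\delta$ produces a contradiction. This is the main (though standard) obstacle; everything else is immediate from the identification $\mathrm{ML}(X) = \KK[X]^{\SAut(X)}$ and the equivalence of generic flexibility with $\mathrm{FML}(X) = \KK$ recalled in Section~2.
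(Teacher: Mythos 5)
Your proof is correct and follows essentially the same route as the paper: both arguments show that a regular function $f$ with $\mathrm{div}(f)=D$ must itself be $\SAut(X)$-invariant, which contradicts $\mathrm{FML}(X)=\KK$. The only difference is the mechanism for that step --- the paper argues at the group level ($f$ is an $\SAut(X)$-semi-invariant and $\SAut(X)$, being generated by unipotent subgroups, admits no nontrivial characters), whereas you argue infinitesimally ($\delta(f)\in(f)$ for every LND $\delta$ via the expansion of $\exp(s\delta)(f)$, hence $\delta(f)=0$ by Freudenburg's Principle~5, so $f\in\mathrm{ML}(X)=\KK[X]^{\SAut(X)}$); both mechanisms are standard and valid.
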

\begin{proof}
Since $X$ is generically flexible, we have $\KK[X]^\times=\KK^\times$. Suppose $D=\mathrm{div}(f)$. Since $D$ is  $\SAut(X)$-invariant, we have~$f$ is $\SAut(X)$-semi-invariant, i.e. $\SAut(X)$ multiplies $f$ by a character. But $\SAut(X)$ is generated by $\GA$-subrgoups. Since $\GA$-subgroups are unipotent, they have no nontrivial characters. Therefore, $\SAut(X)$ has no nontrivial character. That is $f$ is $\SAut(X)$-invariant. This contradicts to generic flexibility of~$X$.
\end{proof}

\begin{cor}
If $\KK[X]$ is a UFD and $X$ is generically flexible, then~$X$ is flexible in codimension one.
\end{cor}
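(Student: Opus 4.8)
If $\KK[X]$ is a UFD and $X$ is generically flexible, then $X$ is flexible in codimension one; that is, the complement $X\setminus\mathcal{O}$ of the open $\SAut(X)$-orbit $\mathcal{O}$ has codimension at least $2$ in $X$.

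**The plan.** The strategy is to argue by contradiction: suppose some prime divisor $D$ is contained in $X\setminus\mathcal{O}$. First I would observe that $D$ must be $\SAut(X)$-invariant. Indeed, $\SAut(X)$ acts on the set of prime divisors inside the closed invariant set $X\setminus\mathcal{O}$; since $X$ is irreducible and $\mathcal{O}$ is the open orbit, $X\setminus\mathcal{O}$ is a proper closed invariant subset, hence contains only finitely many prime divisors, which $\SAut(X)$ therefore permutes. But every $\GA$-subgroup is connected, so it cannot act nontrivially on a finite set; since these generate $\SAut(X)$, each such prime divisor $D$ is fixed setwise, i.e. $D$ is $\SAut(X)$-invariant. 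Next, because $\KK[X]$ is a UFD, every prime divisor on $X$ is principal: $D=\mathrm{div}(f)$ for some $f\in\KK[X]$. Now I can invoke Proposition~\ref{tr}, which says precisely that a generically flexible $X$ admits no $\SAut(X)$-invariant principal prime divisor. This contradicts the existence of $D$, so no prime divisor lies in $X\setminus\mathcal{O}$, and the complement has codimension at least $2$.

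**The main obstacle.** There is essentially no obstacle: the corollary is an immediate combination of the divisor-permutation argument already spelled out at the start of Section~\ref{tws} (applied here to the invariant set $X\setminus\mathcal{O}$ rather than to the singular locus) with the UFD hypothesis (making divisors principal) and Proposition~\ref{tr}. The only point deserving a word of care is the reduction to \emph{finitely many} prime divisors in $X\setminus\mathcal{O}$ and the connectedness argument showing each is individually invariant; both are standard and were already used verbatim in the non-normal discussion. One should also note that generic flexibility guarantees $\KK[X]^\times=\KK^\times$, which is exactly the hypothesis Proposition~\ref{tr} needs, so the chain of implications closes cleanly.
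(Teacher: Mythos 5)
Your proof is correct and follows essentially the same route as the paper: use the UFD hypothesis to make every prime divisor principal and then invoke Proposition~\ref{tr}. The only difference is that you explicitly fill in the step (any prime divisor in $X\setminus\mathcal{O}$ is automatically $\SAut(X)$-invariant, by the connectedness of $\GA$-subgroups) which the paper leaves implicit, having already made that argument at the start of Section~\ref{tws}.
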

\begin{proof}
If $\KK[X]$ is a UFD, then each prime divisor on $X$ is principle. Then the assertion of the corollary follows from Proposition~\ref{tr}.
\end{proof}

\begin{pr}
Find new classes of varieties for which generic flexibility implies flexibility in codimension one. 
\end{pr}

One more type of divisors that can not be $\SAut(X)$-stable is as follows. Let us recall that a $\KK^\times$-action is {\it hyperbolic} if it has two non-zero weight spaces for weights of different signs. Suppose $D$ is the set of stable points of non-hyperbolic $\mathbb{K}^\times$-action. This is equivalent to that $\KK[X]=\bigoplus\limits_{i\geq 0}\KK[X]_i$. And $D=\mathbb{V}(I)$, where $I=\bigoplus\limits_{i>0}\KK[X]_i$. The following proposition is a particular case of~\cite[Proposition~3]{GSh}.
\begin{prop}\label{hml}
Suppose a prime divisor $D$ containing a regular point of $X$ is the set of stable points of non-hyperbolic $\mathbb{K}^\times$-action. Then $D$ is not $\SAut(X)$-invariant. 
\end{prop}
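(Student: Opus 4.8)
The plan is to deduce Proposition~\ref{hml} from the cited result~\cite[Proposition~3]{GSh}, so the main work is to identify exactly which hypothesis of that proposition needs to be checked and to reduce the present (apparently weaker) hypothesis to it. I would begin by unwinding the setup: a non-hyperbolic $\KK^\times$-action means $\KK[X]=\bigoplus_{i\geq 0}\KK[X]_i$, the divisor $D$ is the zero set of the ideal $I=\bigoplus_{i>0}\KK[X]_i$, and $D$ is assumed prime and to contain a regular point $p$ of $X$. The corresponding semisimple derivation $\delta_0$ (the Euler-type derivation of the grading) vanishes on $D$, so $D$ is $\KK^\times$-invariant; the claim is that no collection of $\GA$-subgroups can keep $D$ invariant, i.e. $D$ is not $\SAut(X)$-invariant.

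The key step is to produce one $\GA$-subgroup (equivalently one LND $\delta$ on $\KK[X]$) that does not preserve $D$, i.e. $\delta(I)\not\subseteq I$. I would argue by contradiction: suppose every LND preserves $I$. Pick a regular point $p\in D$ and a regular point $q\notin D$ in the same connected chart; since $X$ is generically flexible away from $D$ is not assumed here, instead I would use the regular point $p\in D$ together with the local structure. The cleanest route is the one used repeatedly in this paper (Lemma~\ref{fl}, Lemma~\ref{ffll}): decompose an arbitrary LND $\delta=\sum_i\delta_i$ into $\ZZ$-homogeneous components for the given grading. If $D$ (hence $I$) were $\SAut(X)$-invariant, then in particular each $\delta_i$ would have to respect $I$, and combining this with the homogeneity constraint $\delta_i(\KK[X]_j)\subseteq\KK[X]_{i+j}$ forces every LND to be ``non-negatively graded'' in a way that makes the fixed locus $D$ too large, contradicting that $D$ contains a regular point of codimension one. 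Concretely, I expect the argument of~\cite[Proposition~3]{GSh} shows that if $D$ is $\SAut(X)$-invariant and contains a regular point, then $D$ must in fact be contained in $X^{\SAut(X)}$ locally, which is incompatible with $\dim D=\dim X-1$ by an application of Lemma~\ref{el}: a regular point of the invariant divisor would have its full tangent space $\mathrm{T}_pX$ of dimension $\dim X$ spanned by tangent vectors to $\GA$-orbits lying inside $\mathrm{T}_pD$, forcing these vectors into a space of dimension $\leq \dim X-1$, a contradiction.

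So the steps, in order, are: (1) rephrase the hypothesis in terms of the grading and the ideal $I=\bigoplus_{i>0}\KK[X]_i$, noting $D=\mathbb V(I)$ is $\KK^\times$-invariant; (2) invoke~\cite[Proposition~3]{GSh} with the non-hyperbolicity hypothesis playing the role of its assumption, checking that ``set of stable points of a non-hyperbolic action'' matches their formulation of the divisor; (3) if one wants a self-contained argument rather than a black-box citation, assume for contradiction $D$ is $\SAut(X)$-invariant, take a regular point $p\in D$, and apply Lemma~\ref{el} to the closed $\SAut(X)$-invariant subvariety $D$ of dimension $\dim X-1$ to bound the span of $\GA$-tangent vectors at $p$ by $\dim X-1$; (4) derive a contradiction by exhibiting an LND not tangent to $D$ at $p$, using the homogeneous decomposition to show that some component must move $p$ out of $D$ because $I$ is generated in positive degrees while $\KK[X]_0\neq\KK[X]$ is nontrivial. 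The main obstacle is step~(4): producing an explicit LND transverse to $D$, or equivalently extracting from the grading structure the precise mechanism by which non-hyperbolicity (as opposed to hyperbolicity) obstructs invariance — this is exactly the content of~\cite[Proposition~3]{GSh}, so in the interest of brevity I would cite it directly and merely verify that the present statement is the announced special case, remarking that the hyperbolic case is genuinely different since there the grading admits LNDs in both positive and negative degrees that can be arranged to stabilize $I$.
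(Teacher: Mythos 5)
Your primary plan --- cite \cite[Proposition~3]{GSh} and check that the statement is the announced special case --- is exactly what the paper does: it offers no independent proof, only the attribution, followed by a remark recording the explicit LND in the normal case. So on the citation route you and the paper coincide.

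The backup ``self-contained'' sketch, however, has two genuine gaps. First, the contradiction you propose via Lemma~\ref{el} assumes that at the regular point $p\in D$ the tangent vectors to $\GA$-orbits span all of $\mathrm{T}_pX$, i.e.\ that $p$ is flexible; nothing in the hypotheses gives this ($X$ is not assumed flexible or even generically flexible, and $p$ lies on the putatively invariant divisor, where flexibility would already be contradictory). Second, step~(4) is circular: to refute invariance you must exhibit an LND $\delta$ with $\delta(I)\not\subseteq I$, which is precisely the assertion to be proved, and the homogeneous-decomposition lemmas (\ref{fl}, \ref{ffll}) only constrain LNDs that are already given --- they manufacture none. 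The missing ingredient is the construction the paper states immediately after the proposition (from \cite[Lemma~5.8]{BGSh}): for normal $X$ pick a rational function $h$ with $\mathrm{div}(h)=-D+E$, $E$ effective, and set $\delta(f)=jfh$ for $f\in\KK[X]_j$; non-hyperbolicity, i.e.\ $\KK[X]=\bigoplus_{i\geq 0}\KK[X]_i$ with $I=\bigoplus_{i>0}\KK[X]_i$ cutting out $D$, is what makes $\delta$ regular and locally nilpotent, and for $f\in\KK[X]_1$ vanishing to order one along $D$ the image $\delta(f)=fh$ does not vanish along $D$, so $I$ is not preserved. Without this (or an equivalent transverse LND) the sketch does not close.
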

Moreover, if $X$ is normal, under conditions of the previous proposition one can write down explicitly an LND $\delta$ such that $D$ is not $\exp(\delta)$-invariant, see~\cite[Lemma~5.8]{BGSh}. For this purpose one should fix a rational function $h$ such that $\mathrm{div}(h)=-D+E$, where $E$ is an effective divisor. Then for $f\in\KK[X]_j$ one put $\delta(f)=jfh$.  

Let us state the following problems.

\begin{pr}
Find new sufficient conditions on a prime divisor $D$ in which it can not be $\SAut(X)$-invariant. 
\end{pr}

An important class of varieties are closures of orbits of affine algebraic group actions. Such varieties have at least the acting group $G$ in $\Aut(X)$. Often this allows to prove generic flexibility and even flexibility of these varieties. Recall that an irreducible variety is {\it almost homogeneous} if it admits an action of an algebraic group with an open orbit. We have no example of a non-flexible normal affine almost homogeneous $G$-variety with only constant invertible regular functions for a reductive group $G$. Note that if we omit the condition of reducteveness for $G$, then we have some examples. 
\begin{ex}
Let $X=\{xy^2=z^2-1\}$ be a Danielewski surface. We can define an action of the semidirect product $G=\mathbb{K}^\times\rightthreetimes \mathbb{K}$ on $X$ be the rule:
$$
(t,s)\cdot (x,y,z)=(t^2x+2t^2sz+t^2s^2y^2, t^{-1}y, z+sy^2).
$$
It is easy to see that this action has an open orbit $\mathcal{O}=\{y\neq 0\}$. The variety $X$ is normal and admits only constant invertible regular functions, see for example \cite[Theorem~1.2]{H-W}.
But by \cite{ML}, the Makar-Limanov invariant $\mathrm{ML}(X)=\KK[y]\neq\KK$. Therefore, $X$ is not generically flexible. 
\end{ex}

In~\cite[Section~5]{AFKKZ2} the following problem is stated. 
\begin{pr}\label{qgr}
Characterize flexible varieties among the normal almost homogeneous affine varieties. 
\end{pr}

Note that for toric varieties~\cite{AKZ} and horospherical varieties~\cite{GSh} the only condition for a normal variety to be flexible is that invertible functions are constants. Also the same criterion is proved for smooth varieties, see~\cite[Theorem~2]{GSh} and~\cite[Theorem~5.6]{AFKKZ} for the case of semisimple group.

Let us state the following necessary and sufficient condition for a variety with a locally transitive action of an algebraic group to be flexible in codimension one. 
\begin{prop}
Suppose $X$ is a normal irreducible affine variety with only constant invertible regular functions. Assume a connected reductive group~$G$ acts on $X$ with an open orbit. The variety $X$ is not flexible in codimension one if and only if there is $\SAut(X)$-invariant prime divisor in the complement to the open $G$-orbit.  
\end{prop}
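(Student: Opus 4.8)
Write $\Omega$ for the open $G$-orbit. The plan is to prove the two implications separately, splitting the forward one according to whether $X$ is generically flexible. Throughout I use three facts: flexibility of a point is preserved by $\Aut(X)$; since $G$ is connected reductive, $[G,G]$ is generated by unipotent one-parameter subgroups, so $[G,G]\subseteq\SAut(X)$, while $G/[G,G]$ is a torus; and a connected group permuting a finite collection of prime divisors must fix each of them.

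For ``$\Leftarrow$'', let $D\subseteq X\setminus\Omega$ be an $\SAut(X)$-invariant prime divisor, and suppose for contradiction that $X$ is flexible in codimension one, with open $\SAut(X)$-orbit $\mathcal{O}$ such that $X\setminus\mathcal{O}$ has codimension at least two in $X$. As $D$ has codimension one it cannot lie in $X\setminus\mathcal{O}$, so $D\cap\mathcal{O}\neq\varnothing$; picking $x$ there, its $\SAut(X)$-orbit is $\mathcal{O}$ and, by invariance of $D$, lies in $D$, so $\mathcal{O}\subseteq D$, contradicting density of $\mathcal{O}$. Hence $X$ is not flexible in codimension one. (This direction uses neither normality nor reductivity.)

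For ``$\Rightarrow$'', assume $X$ is not flexible in codimension one, and consider first the case where $X$ is \emph{not} generically flexible, so $\mathrm{FML}(X)\neq\KK$. Since $[G,G]\subseteq\SAut(X)$ acts trivially on the $\Aut(X)$-stable field $\mathrm{FML}(X)$, passing to a $G$-equivariant rational quotient $Y$ of $X$ by $\SAut(X)$ (so $\KK(Y)=\mathrm{FML}(X)$) we get a variety on which $[G,G]$ acts trivially; thus the image of the single dense orbit $\Omega$ is a dense orbit of the torus $T=G/[G,G]$, so $Y$ is a toric variety of positive dimension, and pulling back a nonconstant character of its dense torus yields a nonzero $g\in\mathrm{FML}(X)$ that is a $G$-semi-invariant of some nontrivial weight $\chi$. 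Then $\mathrm{div}(g)$ is $G$-invariant; it is nonzero, because $\mathrm{div}(g)=0$ would force $g\in\KK[X]^\times=\KK^\times$ by normality, contradicting $\chi\neq1$; and its support, a proper $G$-invariant closed subset of pure codimension one, is disjoint from the single orbit $\Omega$, hence lies in $X\setminus\Omega$. Finally $g\in\mathrm{FML}(X)$ is $\SAut(X)$-fixed, so $\mathrm{div}(g)$ is $\SAut(X)$-fixed, and connectedness of $\GA$-subgroups makes each of its finitely many prime components $\SAut(X)$-invariant; any component occurring with nonzero multiplicity is the required divisor in $X\setminus\Omega$.

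Suppose instead that $X$ \emph{is} generically flexible, with open $\SAut(X)$-orbit $\mathcal{O}$, which is the (dense) flexible locus; then failure of flexibility in codimension one means $X\setminus\mathcal{O}$ has an irreducible component $D$ of codimension one in $X$, i.e. a prime divisor. The closed $\SAut(X)$-invariant set $X\setminus\mathcal{O}$ has only finitely many codimension-one components, which $\SAut(X)$ permutes, so by connectedness each of them, in particular $D$, is $\SAut(X)$-invariant. Moreover $\Omega\subseteq\mathcal{O}$: the dense sets $\Omega$ and $\mathcal{O}$ meet, so $\Omega$ contains a flexible point, and as $\Omega$ is a single $G$-orbit and flexibility is $\Aut(X)$-invariant, every point of $\Omega$ is flexible. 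Hence $D\subseteq X\setminus\mathcal{O}\subseteq X\setminus\Omega$, as required. The one step that needs real care is the construction in the non-generically-flexible case: making precise the passage to a $G$-equivariant toric model of $\mathrm{FML}(X)$ and the extraction of the $G$-semi-invariant $g\in\mathrm{FML}(X)$, together with the check that $\mathrm{div}(g)$ is nonzero and supported in $X\setminus\Omega$; everything else is routine bookkeeping with orbits, divisors, and connectedness.
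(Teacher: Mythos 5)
Your proof is correct, and its overall skeleton matches the paper's: the easy direction is the same observation that an $\SAut(X)$-invariant prime divisor would have to meet, hence contain, a dense open orbit, and the hard direction in both cases reduces to (a) producing an $\SAut(X)$-invariant prime divisor in $X\setminus\Omega$ when $X$ is not generically flexible, and (b) handling the codimension of the non-flexible locus when it is. The genuine difference is in how you treat (a): the paper simply cites \cite[Proposition~4]{GSh} applied to the group $H$ generated by $\SAut(X)$ and a maximal torus of $G$, whereas you give a self-contained construction — pass to the rational quotient $Y$ with $\KK(Y)=\mathrm{FML}(X)$, observe that $[G,G]\subseteq\SAut(X)$ acts trivially so the residual torus $T=G/[G,G]$ acts on $Y$ with a dense orbit (more safely argued via $\mathrm{FML}(X)^G=\KK(Y)^T=\KK$ and Rosenlicht, since the image of $\Omega$ under a rational map is delicate), pull back a nontrivial character to get a nonconstant $G$-semi-invariant $g\in\mathrm{FML}(X)$, and take a prime component of $\mathrm{div}(g)$, which is nonzero by normality and supported off $\Omega$ by $G$-invariance. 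This buys independence from the cited reference at the cost of having to invoke Rosenlicht/Weil regularization, and you rightly flag it as the step needing the most care; as written it is a sketch but a repairable one. In (b) your route is in fact slightly cleaner than the paper's: you take a codimension-one component of $X\setminus\mathcal{O}$ and note it is automatically $\SAut(X)$-invariant by connectedness of $\GA$-subgroups and lies in $X\setminus\Omega$ because $\Omega\subseteq\mathcal{O}$, whereas the paper argues contrapositively that every non-invariant prime divisor in $X\setminus\Omega$ carries a dense open set of flexible points.
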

\begin{proof}
By definition, if $X$ is flexible in codimension one, then it admits an open orbit $\mathcal{O}$ such that codimension of $X\setminus \mathcal{O}$ is at least $2$. Therefore, there is no $\SAut(X)$-invariant prime divisor on $X$.  

Conversely, let us assume that there is no $\SAut(X)$-invariant prime divisor in the complement to the open $G$-orbit. We define by $H$ the group generated by $\SAut(X)$ and the maximal torus of the acting group~$G$. Then by \cite[Proposition~4]{GSh} if $X$ is not generically flexible, then there exists an $H$-invariant prime divisor. A connected reductive group $G$ is generated by its maximal torus and root $\GA$-subgroups. Therefore, the image of the acting group $G$ in $\Aut(X)$ is contained in~$H$. Hence, an $H$-invariant prime divisor is an $\SAut(X)$-invariant divisor in the complement to the open $G$-orbit. A contradiction. So, $X$ is generically flexible. Since $G$ acts on $X$ by automorphisms, the open orbit consists of flexible points. If each prime divisor in the complement to the open $G$-orbit contains a flexible point, it contains an open  set of flexible points, and hence, $X$ is flexible in codimension one. Suppose $D\subseteq X\setminus \mathcal{O}$ is a prime divisor. Since $D$ is not $\SAut(X)$-invariant, there exists a $\GA$-subgroup that takes a point $p\in D$ to a point $q\notin D$. We can assume that $p$ does not belong to any other irreducible component of $X\setminus \mathcal{O}$. Then the $\GA$-orbit of $p$ do not contained in any irreducible component of $X\setminus \mathcal{O}$. Since this orbit is irreducible, we can conclude that this $\GA$-orbit contains a point from $\mathcal{O}$, i.e. a flexible point. Therefore,~$p$ is flexible.  
\end{proof}
It was proved in~\cite{AZ} that there is no $\SAut(X)$-invariant divisor in the complement to the open orbit in case of affine spherical varieties.  So, we obtain the following corollary. 
\begin{cor}
Normal affine spherical varieties with only constant invertible regular functions are flexible in codimension one.
\end{cor}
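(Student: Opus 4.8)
The plan is to obtain this statement as an immediate consequence of the preceding proposition together with the result of~\cite{AZ}, so the ``proof'' is really a matter of checking that the hypotheses line up. First I would recall the relevant facts about spherical varieties: a normal affine variety $X$ is spherical if a connected reductive group $G$ acts on it with a dense orbit of a Borel subgroup $B\subseteq G$; in particular $G$ itself then acts on $X$ with an open orbit $\mathcal{O}$, and $X\setminus\mathcal{O}$ is a union of finitely many $G$-stable prime divisors. Thus a normal affine spherical variety $X$ with $\KK[X]^\times=\KK^\times$ satisfies all the assumptions of the preceding proposition: normality, constancy of invertible regular functions, and a locally transitive action of a connected reductive group.

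Next I would invoke~\cite{AZ}: on an affine spherical variety there is no $\SAut(X)$-invariant prime divisor contained in the complement of the open $G$-orbit. By the preceding proposition, the existence of such a divisor is precisely the condition equivalent to $X$ failing to be flexible in codimension one; its absence therefore forces $X$ to be flexible in codimension one, which is the claim.

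The only point demanding care is to make sure that the statement extracted from~\cite{AZ} is quoted in precisely the form used in the proposition---namely that invariance is taken under the whole group $\SAut(X)$, and that the divisor in question is required to lie in $X\setminus\mathcal{O}$ for the open $G$-orbit $\mathcal{O}$ (not merely in the complement of the open $B$-orbit, which a priori could be larger). Once this matching of hypotheses is verified, the argument is complete; in particular no explicit construction of $\GA$-actions and no case analysis of the kind carried out in Section~\ref{ms} is needed here.
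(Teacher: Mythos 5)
Your proposal is correct and takes essentially the same route as the paper: the corollary is deduced immediately from the preceding proposition together with the result of~\cite{AZ} that an affine spherical variety admits no $\SAut(X)$-invariant prime divisor in the complement of the open $G$-orbit. The paper treats this as an immediate consequence without further argument, exactly as you do.
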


Spherical varieties is an important class of varieties. So let us state one more problem, which also have been stated in~\cite[Section~5]{AFKKZ2}. 

\begin{pr}
Are all normal affine spherical varieties with only constant invertible regular functions flexible?
\end{pr}

In \cite{AFKKZ} the following proposition is proved. (In the case when $X$ is an affine space, this also follows from Gromov-Winkelmann theorem). 
\begin{prop}{\cite[Corollary~4.18]{AFKKZ}}
Let $X$ be an affine variety. Suppose that the group $\SAut(X)$ acts with an open orbit $\mathcal{O}\subseteq X$. Then for any finite subset $Z\subseteq \mathcal{O}$ and for any closed subset $Y\subseteq X$ of codimension $\geq 2$ with $Z\cap Y=\varnothing$ there is an orbit $C\cong \mathbb{A}^1$ of a $\GA$-action on~$X$ which does not meet $Y$ and passes through each point of $Z$.
\end{prop}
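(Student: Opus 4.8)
The plan is to reduce, using infinite transitivity of $\SAut(X)$, to the problem of moving one fixed ``model'' $\GA$-orbit into general position with respect to $Y$, and then to finish by a dimension count inside an algebraic family of automorphisms that fix $Z$ pointwise.

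First I would record two reductions. Since $\mathcal{O}$ is an $\SAut(X)$-orbit and every $\GA$-subgroup lies in $\SAut(X)$, the $\GA$-orbit through any point of $\mathcal{O}$ stays inside $\mathcal{O}$; hence any curve produced below automatically lies in $\mathcal{O}$, and it suffices to avoid $Y\cap\mathcal{O}$, which is closed of codimension $\geq 2$ in $\mathcal{O}$. After deleting repetitions I may assume $Z=\{p_1,\dots,p_m\}$ consists of $m$ distinct points. Now fix once and for all a nonzero LND $\delta_0$ of $\KK[X]$ and a point of $\mathcal{O}$ at which $\delta_0$ does not vanish; its $\GA$-orbit $C_0$ (under $H_0=\exp(\KK\delta_0)$) is isomorphic to $\AA^1$, is contained in $\mathcal{O}$, and contains infinitely many points, so one may choose distinct points $q_1,\dots,q_m\in C_0$. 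If $g\in\SAut(X)$ satisfies $g(p_i)=q_i$ for all $i$, then $g^{-1}(C_0)$ is the orbit of the $\GA$-subgroup $g^{-1}H_0g$ (a conjugate of a $\GA$-subgroup by an automorphism is again a $\GA$-subgroup), it is isomorphic to $\AA^1$, and it passes through every $p_i$. So the whole statement reduces to: \emph{among the $g\in\SAut(X)$ with $g(p_i)=q_i$ for all $i$, one can choose one with $g^{-1}(C_0)\cap Y=\varnothing$}.

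To carry this out I would invoke the collective (infinite) transitivity of $\SAut(X)$ on $\mathcal{O}$, which holds because $\mathcal{O}$ consists of flexible points (see~\cite{AFKKZ}). Fix one $g_0$ with $g_0(p_i)=q_i$ and set $C_1=g_0^{-1}(C_0)$, a $\GA$-orbit isomorphic to $\AA^1$ through all the $p_i$; every admissible $g$ is then $g=g_0h$ with $h$ in the pointwise stabilizer $S_Z\subseteq\SAut(X)$ of $Z$, and $g^{-1}(C_0)=h^{-1}(C_1)$. Thus I must find $h\in S_Z$ with $h^{-1}(C_1)\cap Y=\varnothing$. Because $\SAut(X)$ is $(m+1)$-transitive on $\mathcal{O}$, the group $S_Z$ acts transitively on $\mathcal{O}\setminus Z$; moreover $S_Z$ contains all $\GA$-subgroups defined by LNDs vanishing on $Z$ (e.g.\ replicas $f\delta$ with $f$ in the ideal of $Z$ and $f\in\Ker\delta$), and this family is large enough to feed the orbit-map machinery of~\cite{AFKKZ}. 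From that one extracts an algebraic family of automorphisms $\{h_s\mid s\in S\}\subseteq S_Z$, with $S$ an affine space, such that the map
$$
\mu\colon S\times C_1\longrightarrow X,\qquad (s,c)\longmapsto h_s^{-1}(c),
$$
is dominant, and in fact generically smooth of relative dimension $\dim S+1-\dim X$.

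Granting this, a dimension count finishes the argument. For fixed $s$ the fiber of $\mu$ over $Y$ is $\{c\in C_1\mid h_s^{-1}(c)\in Y\}=h_s(Y)\cap C_1$, so I want an $s$ for which this set is empty. Since $Y$ has codimension $\geq 2$ and $C_1$ is a curve, $\mu^{-1}(Y)$ has dimension at most $\dim S-1$, hence $\mu^{-1}(Y)\to S$ is not dominant; a general $s\in S$ then satisfies $h_s(Y)\cap C_1=\varnothing$, i.e.\ $h_s^{-1}(C_1)\cap Y=\varnothing$. Taking $C=h_s^{-1}(C_1)$ gives a $\GA$-orbit isomorphic to $\AA^1$ which avoids $Y$ and, since $h_s$ fixes $Z$ pointwise, passes through each point of $Z$. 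The step I expect to be the main obstacle is the dominance (and generic equidimensionality) of $\mu$: one must show that after pinning down the $m$ points of $Z$ the stabilizer $S_Z$ still moves the curve $C_1$ over a dense, ``large enough'' subset of $X$, and then control the fibers of $\mu$ over the special locus $Y$. Both points rely on the saturated-set-of-$\GA$-subgroups technology of~\cite{AFKKZ} (replicas of LNDs and the submersivity of the composition maps $\exp(t_1\delta_1)\cdots\exp(t_N\delta_N)$ at generic parameters), and checking that this survives the constraint of fixing $Z$ pointwise is the technical heart.
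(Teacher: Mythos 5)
First, a point of reference: the paper does not prove this statement at all --- it is quoted verbatim from \cite[Corollary~4.18]{AFKKZ} --- so there is no in-paper argument to compare with, and your attempt has to be measured against the original proof in \cite{AFKKZ}. Your overall strategy (fix a nontrivial $\GA$-orbit $C_0\cong\AA^1$ inside $\mathcal{O}$, use infinite transitivity of $\SAut(X)$ on the open orbit to pull it back through the points of $Z$ via a conjugate $\GA$-subgroup $g^{-1}H_0g$, then move the resulting curve off $Y$ by a general element of the pointwise stabilizer $S_Z$) is exactly the strategy of the original proof, and the formal reductions you perform --- deduplicating $Z$, observing that conjugates of $\GA$-subgroups are $\GA$-subgroups, and replacing $Y$ by $Y\cap\mathcal{O}$ --- are all correct.

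The gap is the one you flag yourself, and it is genuine: dominance of $\mu\colon S\times C_1\to X$ together with $\cdim Y\geq 2$ does \emph{not} yield $\dim\mu^{-1}(Y)\leq\dim S-1$. The fiber dimension of a dominant morphism can jump over a proper closed subset, so a priori a divisor of $S\times C_1$ could be contracted into $Y$, making $\mu^{-1}(Y)\to S$ dominant and ruining the conclusion that a general $h_s$ works. What is actually needed is equidimensionality (in practice smoothness, hence flatness) of $\mu$ over \emph{all} of $Y\cap\mathcal{O}$, not just over a dense open set; and one must manufacture the family $\{h_s\}$ inside $S_Z$ --- from one-parameter subgroups given by replicas $f\delta$ with $f\in\Ker\delta$ vanishing on $Z$ --- while keeping enough of it to make $\mu$ submersive at every relevant point. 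Establishing this is precisely the content of the saturated-set and transversality machinery of \cite{AFKKZ} (their Theorem~4.14 and the results of Section~1 it rests on), and it is the real mathematical substance of Corollary~4.18. As written, your proposal is an accurate and well-organized reduction of the statement to that hard step, rather than a proof of it.
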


Note that if $D$ is an $\SAut(X)$-invariant prime divisor of a generically flexible variety, then we can take $Y=D$.

\end{document}